\documentclass[11pt]{amsart}
\usepackage{amsthm}
\usepackage{mathrsfs}
\usepackage{enumerate}
\usepackage{mathrsfs}
\usepackage{amssymb,amsmath,amsthm,color}
\usepackage{blindtext}
\usepackage{esint}
\usepackage{hyperref}
\usepackage[pdftex]{graphicx}

\theoremstyle{plain}

\setlength{\textwidth 6.5in} \setlength{\textheight 9.0in}
\voffset -0.7in \hoffset -0.6in

\newtheorem{theorem}{Theorem}[section]

\newtheorem{lemma}[theorem]{Lemma}

\newtheorem{remark}[theorem]{Remark}

\newcommand{\R}{\mathbb R}
\newcommand{\Z}{\mathbb Z}
\newcommand{\Na}{\mathbb N}

\newcommand{\C}{{\mathbb C}}

\renewcommand{\H}{\mathbb H}

\renewcommand{\Im}{\operatorname{Im}}
\newcommand{\tr}{\operatorname{tr}}

\title{Kato-Ponce estimates for Fractional Sublaplacians}
\date{\today}

\author[L. Fanelli, L. Roncal]{Luca Fanelli and  Luz Roncal}

\address{Luca Fanelli: Ikerbasque $\&$ Departmento de Matem\'aticas, Universidad del Pa\'is Vasco/Euskal Herriko Unibertsitatea (UPV/EHU), Aptdo. 644, 48080, Bilbao, Spain}
\email{luca.fanelli@ehu.es}
\address{Luz Roncal: Basque Center for Applied Mathematics (BCAM), 48009, Bilbao, Spain and Ikerbasque, Basque Foundation for Science, 48011 Bilbao, Spain}
\email{lroncal@bcamath.org}

\begin{document}

\begin{abstract}
We give a proof of commutator estimates for fractional powers of the sublaplacian on the Heisenberg group. Our approach is based on pointwise and $L^p$ estimates involving square fractional integrals and  Littlewood-Paley square functions. 
\end{abstract}

	\maketitle

\section{Introduction}
\label{sec:intro}

In \cite{KP}, Kato and Ponce proved the well known commutator estimate
$$
\|J^s(fg)-fJ^sg\|_{L^p(\R^n)}\lesssim\|J^sf\|_{L^p(\R^n)}\|g\|_{L^{\infty}(\R^n)}+\|\partial f\|_{L^\infty(\R^n)}\|J^{s-1}g\|_{L^{\infty}(\R^n)},
$$
for $1<p<\infty$, and $s>0$,  where $J^s:=(1-\Delta)^{s/2}$, $\partial=(\partial_1,\cdots, \partial_n)$ and $f,g\in \mathcal{S}(\R^n)$. Closely related to this, we have the following estimate by Kenig, Ponce, and Vega in \cite{KPV}
$$
\|(-\Delta)^{s/2}(fg)-f(-\Delta)^{s/2}g-g(-\Delta)^{s/2}f\|_{L^p(\R^n)}\lesssim \|(-\Delta)^{s_1/2}f\|_{L^{p_1}(\R^n)}\|(-\Delta)^{s_2/2}g\|_{L^{p_2}(\R^n)},
$$
where $s=s_1+s_2$, $0<s,s_1,s_2<1$, $\frac1p=\frac{1}{p_1}+\frac{1}{p_2}$ and $1<p,p_1,p_2<\infty$.
The above estimates naturally arise in several different contexts. In particular, they turn out to be fundamental to close fixed point arguments in Sobolev spaces for some nonlinear dispersive PDE's. This motivates the investigation about the validity of commutator estimates in different geometries than the Euclidean setting. 
 
In the recent paper \cite{Ma}, Maalaoui provided a pointwise estimate for commutators involving fractional powers of the sublaplacian on Carnot groups of homogeneous dimension $Q$. The result in \cite{Ma} is rather general, and includes the case of the fractional powers of the sublaplacian on the Heisenberg group. In the present manuscript, we give an alternative proof in this case, which is more strictly related to the geometric structure of the Heisenberg group. 

Our approach, which is inspired in the proof by D'Ancona in \cite{D} for the Euclidean case, is based on the study of nontangential square functions as crucial tools for the proof. In addition, our strategy makes use of both
 non-conformal and conformal harmonic extensions associated to the sublaplacian. We found the study of these ingredients of independent interest. They motivated us to take a chance to revisit the result by \cite{Ma} in this particular case and provide also weighted versions of the result.

Before stating our main results, we need to introduce the geometric and functional setting.
A remarkable way to characterize nonlocal operators such as $(-\Delta)^{s/2}$ is via a weighted \textit{Dirichlet-to-Neumann} map of a extension problem. This approach is present in the literature since the 1950's, with the paper by Huber \cite{Hu}. Closely related and containing the same circle of ideas, we find the work by Muckenhoupt and Stein \cite{MuSt}. We also mention the extension procedure introduced by Molchanov and Ostrovskii in \cite{MO} within a context of probability, see also the related work by Spitzer \cite{Sp} and the more recent by Kolsrud \cite{Ko}.

In particular, the landmark work by Caffarelli and Silvestre \cite{Caffarelli-Silvestre}, in which they studied the extension problem associated to the Laplacian on $\R^n$, and realized the fractional power $(-\Delta)^{s/2}$ as the map taking Dirichlet data to Neumann data, has been a rich source of development in the study of nonlocal operators in the last few years, specially from the point of view of partial differential equations. 

Fractional powers of Laplacians also occur naturally in conformal geometry and scattering theory.  Chang-Gonz\'alez \cite{CG} showed that the fractional order Paneitz operators $P_{\gamma}$ arising in the work of Graham and Zworski \cite{GZ} in conformal geometry coincide with $(-\Delta)^{s/2}$ when the conformally compact Einstein manifold is taken to be the hyperbolic space. Later, Frank et al. \cite{FGMT} studied the extension problem associated to the sublaplacian $\mathcal{L}$ on the Heisenberg group $\H^n$. Unlike the case of $\R^n$, where $(-\Delta)^{s/2}$ are conformally invariant, in the context of Heisenberg groups $\mathcal{L}^s$, defined as the map taking Dirichlet to Neumann data in \eqref{eq:ST} below, are not. Hence, conformally invariant fractional powers of the sublaplacian, denoted by $\mathcal{L}_s$, are more relevant from a geometrical point of view than the pure fractional powers $\mathcal{L}^s$, see \cite{B,BFM, FL}. 

Let $\H^n:=\C^n\times \R$ denote the $(2n+1)$ dimensional Heisenberg group (see Section \ref{sec:tool} for a brief review of the group structure).
For $s>0$, given a function $f\in C_0^{\infty}(\H^n\times \R^+)$, the extension problem for $\mathcal{L}^s$ consists of finding $U\in C_0^{\infty}(\H^n\times \R^+)$ such that
\begin{equation}
\label{eq:epHnoc}
\begin{cases}
\big(\partial_{\rho\rho}+\frac{1-2s}{\rho}\partial_{\rho}-\mathcal{L}\big)U((z,t),\rho)=0 \qquad ((z,t),\rho) \in \H^n\times \R^+,\\
U((z,t),0)=g(z,t), \qquad (z,t)\in \H^n.
\end{cases}
\end{equation}
The extension problem for general second order partial differential operators has been studied by Stinga-Torrea \cite{ST}. The sublaplacian on $\H^n$ lies within this general theory and then it is shown that
\begin{equation}
\label{eq:ST}
\mathcal{L}^sg=c_s\lim_{\rho\to0} \rho^{1-2s}\partial_{\rho}U.
\end{equation}
We mention that when we consider $-\Delta$ and $\R^n$ instead of $\mathcal{L}$ and $\H^n$, then we are dealing with the extension problem for $(-\Delta)^s$ in \cite{Caffarelli-Silvestre}.

For $s>0$, the extension problem for the sublaplacian $\mathcal{L}_s$ on $\H^n$  consists of finding $U\in C_0^{\infty}(\H^n\times \R^+)$ such that
\begin{equation}
\label{eq:epH}
\begin{cases}
\big(\partial_{\rho\rho}+\frac{1-2s}{\rho}\partial_{\rho}+\frac14\rho^2\partial_{tt}-\mathcal{L}\big)U((z,t),\rho)=0 \qquad ((z,t),\rho) \in \H^n\times \R^+,\\
U((z,t),0)=f(z,t), \qquad (z,t)\in \H^n.
\end{cases}
\end{equation}
Note that the latter extension problem is different from the problem \eqref{eq:epHnoc} due to the appearance of the extra term $\frac14\rho^2\partial_t^2$. Indeed, if we consider $\H^n$ as the boundary of the Siegel's upper half space $\Omega_{n+1}$, then the extension problem \eqref{eq:epH} occurs naturally. Using this connection, Frank et al.  \cite{FGMT} showed that for $f\in C_0^{\infty}(\H^n)$ there is a unique solution of the above equation which satisfies 
$$
\mathcal{L}_sf=c_s\lim_{\rho\to0} \rho^{1-2s}\partial_{\rho}U.
$$

We have defined the conformally and non conformally invariant fractional powers $\mathcal{L}_s$ and $\mathcal{L}^s$, respectively, via the corresponding extension problems. Other equivalent definitions are available and moreover it can be checked, see Subsection \ref{sub:fractionalSub}, that the operators $\mathcal{L}_s $ and $ \mathcal{L}^s $ are equivalent in $ L^p(\H^n) $, i.e., there exist $c, C>0$ such that
$$
c\|\mathcal{L}^sf\|_{L^p}\le \|\mathcal{L}_sf\|_{L^p}\le C \|\mathcal{L}^sf\|_{L^p},
\qquad
1<p<\infty.
$$

Our main result is the following.
  \begin{theorem}
  \label{thm:main}
  Let $n\ge1$. Assume that $s,s_1,s_2$ and $p,p_1,p_2$ satisfy
  $$
  s=s_1+s_2, \qquad s_j\in (0,1/4), \qquad \frac{1}{p}=\frac{1}{p_1}+\frac{1}{p_2}, \qquad \frac{2Q}{Q+2s_j}<p_j<\infty.
  $$
  Then for all $u,v\in \mathcal{S}(\H^n)$ we have 
  \begin{equation}
  \label{eq:1}
  \|\mathcal{L}_s(u v)-u\mathcal{L}_sv-v\mathcal{L}_su\|_{L^p}\lesssim \|\mathcal{L}_{s_1}u\|_{L^{p_1}}\|\mathcal{L}_{s_2}v\|_{L^{p_2}}
  \end{equation}
  and 
  \begin{equation}
  \label{eq:2}
  \|\mathcal{L}^s(u v)-u\mathcal{L}^sv-v\mathcal{L}^su\|_{L^p}\lesssim \|\mathcal{L}^{s_1}u\|_{L^{p_1}}\|\mathcal{L}^{s_2}v\|_{L^{p_2}}.
\end{equation}
  Moreover, for $w_j\in A_{q_j}$, where $1<q_j=p_j\big(\frac12+\frac{s_j}{Q}\big)$,
    \begin{equation}
  \label{eq:3}
  \|\mathcal{L}_s(u v)-u\mathcal{L}_sv-v\mathcal{L}_su\|_{L^p(w_1^{p/p_1}w_2^{p/p_2})}\lesssim \|\mathcal{L}_{s_1}u\|_{L^{p_1}(w_1)}\|\mathcal{L}_{s_2}v\|_{L^{p_2}(w_2)}
\end{equation}
  and 
  \begin{equation}
  \label{eq:4}
  \|\mathcal{L}^s(u v)-u\mathcal{L}^sv-v\mathcal{L}^su\|_{L^p(w_1^{p/p_1}w_2^{p/p_2})}\lesssim \|\mathcal{L}^{s_1}u\|_{L^{p_1}(w_1)}\|\mathcal{L}^{s_2}v\|_{L^{p_2}(w_2)}.
\end{equation}
  \end{theorem}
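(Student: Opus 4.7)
The plan is to adapt D'Ancona's extension-based strategy \cite{D} to the Heisenberg setting, by lifting $u,v,uv$ to $\H^n\times\R^+$ via the two extension problems \eqref{eq:epHnoc} and \eqref{eq:epH}. We describe the argument for $\mathcal{L}_s$, the case of $\mathcal{L}^s$ being analogous and simpler. Denote by $L_s$ the degenerate elliptic operator on the left of \eqref{eq:epH} and let $U$, $V$, $W$ be the corresponding extensions of $u$, $v$, $uv$. A direct Leibniz computation (using $\mathcal{L}=-\sum_j X_j^2$) yields the product rule
$$
L_s(UV) \;=\; V\,L_sU + U\,L_sV + 2\,\partial_\rho U\,\partial_\rho V + \tfrac{\rho^2}{2}\,TU\,TV + 2\sum_{j=1}^{2n} X_jU\,X_jV,
$$
so the error $E:=W-UV$ satisfies $L_sE=-F$ with $E|_{\rho=0}=0$, where $F$ is the sum of the three gradient cross products on the right. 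This is the Heisenberg analogue of the Euclidean identity that drives D'Ancona's pointwise estimate.

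The Dirichlet-to-Neumann identity \eqref{eq:ST} then reads
$$
\mathcal{L}_s(uv) - u\,\mathcal{L}_sv - v\,\mathcal{L}_su \;=\; c_s\lim_{\rho\to0^+}\rho^{1-2s}\,\partial_\rho E(\cdot,\rho),
$$
and the technical heart of the proof is to represent this boundary trace as an integral over $\rho\in(0,\infty)$ of $F$ against the Green's function for the Dirichlet problem of $L_s$. Applying Cauchy-Schwarz in $\rho$ together with a nontangential enlargement over a Kor\'anyi gauge ball yields the pointwise bound
$$
\bigl|\mathcal{L}_s(uv) - u\,\mathcal{L}_sv - v\,\mathcal{L}_su\bigr|(z,t) \;\lesssim\; \mathcal{S}_{s_1}(u)(z,t)\,\mathcal{S}_{s_2}(v)(z,t),
$$
where $\mathcal{S}_{s_j}$ is a nontangential Heisenberg square function built from the corresponding extension and weighted by $\rho^{1-2s_j}\,d\rho$.

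The remainder is $L^p$ harmonic analysis on $\H^n$. H\"older's inequality reduces \eqref{eq:1} to the Littlewood-Paley bound $\|\mathcal{S}_{s_j}u\|_{L^{p_j}}\lesssim\|\mathcal{L}_{s_j}u\|_{L^{p_j}}$, which follows from the square-function characterization of $\mathcal{L}^{s_j}$ provided by the heat semigroup calculus on $\H^n$, combined with the $L^p$ equivalence $\|\mathcal{L}_{s_j}u\|_{L^{p_j}}\simeq\|\mathcal{L}^{s_j}u\|_{L^{p_j}}$ recorded in the introduction; the bound \eqref{eq:2} is handled identically with the non-conformal extension. For the weighted inequalities \eqref{eq:3}--\eqref{eq:4} we upgrade these unweighted square-function estimates to their Muckenhoupt weighted analogues in the class $A_{q_j}$. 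The exponent $q_j=p_j(\tfrac12+\tfrac{s_j}{Q})$ and the restriction $p_j>2Q/(Q+2s_j)$ arise by matching the $\rho$-homogeneity of $\mathcal{S}_{s_j}$ to the Hardy-Littlewood-Sobolev / Stein-Weiss inequality on the Heisenberg group.

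The main obstacle is the extra cross term $\tfrac{\rho^2}{2}\,TU\,TV$ generated by the conformal factor $\tfrac14\rho^2\partial_{tt}$, which has no Euclidean counterpart. The vertical derivative $T$ is not controlled by horizontal fractional Sobolev norms at the natural scale, and the weight $\rho^2$ pushes us outside the measure $\rho^{1-2s_j}\,d\rho$ that underlies the standard square-function identification of $\mathcal{L}^{s_j}$. Rebalancing this term so that each factor carries a single $\rho\cdot T$ before applying Cauchy-Schwarz is precisely where the restriction $s_j\in(0,1/4)$ enters: it keeps the effective weights on both resulting square functions inside the range where the Littlewood-Paley identification of the fractional sublaplacian remains valid, and simultaneously preserves the Stein-Weiss condition $p_j>2Q/(Q+2s_j)$ used in the weighted step.
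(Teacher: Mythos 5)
Your plan is not the paper's argument, and as written it has genuine gaps at exactly the two places where the real work happens. The first is the splitting $s=s_1+s_2$. In your setup $U$, $V$ and $W$ are all extensions for the single parameter $s$ (problem \eqref{eq:epH} depends on $s$), so after your Green-function representation of the weighted Neumann trace of $E$ and a Cauchy--Schwarz ``in $\rho$'' the natural weight is $\rho^{1-2s}$ on both factors; you give no mechanism that produces a square function at scale $s_1$ acting on $u$ and one at scale $s_2$ acting on $v$ with weights $\rho^{1-2s_j}$, and the homogeneity bookkeeping you invoke is never carried out (nor is the Green function for $L_s$ on $\H^n\times\R^+$, with the degenerate term $\frac14\rho^2\partial_{tt}$, available in any usable form). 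The paper gets this splitting for free from the pointwise kernel representation \eqref{eq:ir}, valid because $s=s_1+s_2<1/2$ (this, not any ``rebalancing of $\rho T$'', is where $s_j\in(0,1/4)$ enters): the commutator equals $b(n,s)T_s(u,v)$ with the bilinear form $T_s$, and Cauchy--Schwarz on the kernel $|y|^{-(Q+2s)}=|y|^{-(Q/2+2s_1)}|y|^{-(Q/2+2s_2)}$ gives $|T_s(u,v)|\le\mathcal{D}_{s_1}u\,\mathcal{D}_{s_2}v$. Your route discards this identity without supplying a replacement, and your Leibniz computation for $L_s(UV)$, while formally fine, does not by itself lead anywhere.

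The second gap is the claimed bound $\|\mathcal{S}_{s_j}u\|_{L^{p_j}}\lesssim\|\mathcal{L}_{s_j}u\|_{L^{p_j}}$ for a nontangential square function built from the \emph{conformal} extension. This is asserted via ``heat semigroup calculus'', but it is precisely the point the paper flags as unavailable: there is no subordination/Riesz-potential formula for $\mathcal{L}_{-s/2}$ in terms of $e^{-\rho\mathcal{L}_{1/2}}$ (see the remark following Lemma \ref{lem:identity}), which is why the paper defines $g_\lambda^*$ through the non-conformal Poisson semigroup, proves its $L^p(w)$ boundedness (Theorems \ref{thm:gstar} and \ref{thm:gstarw}), and then establishes the nontrivial pointwise estimate $\mathcal{D}_su\le\Lambda(n,s)\,g_\lambda^*(\mathcal{L}^su)$ of Theorem \ref{thm:pointwise} using Lemma \ref{lem:identity}, subharmonicity of $|\nabla F|^2$ and the mean value inequality \eqref{eq:mean}; the passage between $\mathcal{L}^{s_j}$ and $\mathcal{L}_{s_j}$ is then handled by the multiplier equivalence \eqref{eq:equiv}. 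Relatedly, the constraints $p_j>2Q/(Q+2s_j)$ and $q_j=p_j(\frac12+\frac{s_j}{Q})$ do not come from a Stein--Weiss matching of $\rho$-homogeneities, but from the compatibility window $\max\{1,2/p_j\}<\lambda_j<1+\frac{2s_j}{Q}$ between the pointwise estimate and the mapping range of $g_{\lambda}^*$; your proposal offers no derivation of these exponents, and the treatment of the cross term $\rho^2\,TU\,TV$ and of the weighted case remains declarative. So the proposal is an outline whose central analytic steps are missing, not a proof.
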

  
  \begin{remark}
  We notice that Theorem \ref{thm:main} is providing weighted versions of the Kato-Ponce inequalities for fractional sublaplacians in the Heisenberg group, which were missing in \cite{Ma}.
  \end{remark}
  
We will follow the ideas in \cite{D}, which in turn are inspired by \cite[Chapter V, \S 6.12]{St}. The proof of our theorem will use analogue tools as the ones utilized in the Euclidean case. Nevertheless, in the Heisenberg group, such tools will be sometimes not explicitly available and we will have to work them out. 
We define the \textit{square fractional integral} as
  $$
  \mathcal{D}_su(x):=\Big(\int_{\H^n}\frac{|u(xy^{-1})-u(x)|^2}{|y|^{Q+4s}}\,dy\Big)^{1/2}, \quad 0<s<1/2,\quad x\in \H^n,
  $$
 where $xy^{-1}$ is the right translation by $y^{-1}$ on the Heisenberg group, see Subsection \ref{sec:Heisenberg}, and $ Q = 2n+2$ is the \textit{homogeneous dimension} of $\H^n$.
One of the crucial steps in the proof is a pointwise estimate for the square fractional integrals $ \mathcal{D}_{s}$ by the so-called $g_{\lambda}^*$-function, defined in terms of the Poisson semigroup associated to the non-conformally invariant harmonic extension, i.e., to the problem \eqref{eq:epHnoc} for $s=1/2$.  Let $X_1,\ldots, X_n, Y_1,\ldots, Y_n$, $T$, be basis for the Lie algebra of left-invariant vector fields on $\H^n$ (see Subsection~\ref{sub:Sub}). Let 
$$
\nabla =(X_1, \ldots, X_{n}, Y_1,\ldots, Y_{n}, \partial_{\rho}),
$$ 
 we define the \textit{Littlewood nontangential square function} $g_{\lambda}^*$ as
\begin{equation*}
g_{\lambda}^*(u)(x):=\Big(\int_0^{\infty}\int_{\H^n}\Big(\frac{\rho}{\rho+|y|}\Big)^{\lambda Q}\rho^{1-Q}|\nabla U(xy^{-1},\rho)|^2\,dy\,d\rho\Big)^{1/2},\quad x\in \H^n,
\end{equation*}
where $U(x,\rho)$ is the non-conformal harmonic extension of $u(x)$ in the upper half space. We will prove the following.
  \begin{theorem}
  \label{thm:pointwise}
  Let $n\ge 1$, $0<s<1/2$ and $\lambda<1+\frac{2s}{Q}$. Then
  $$
   \mathcal{D}_{s}u(x)\le \Lambda(n,s)g_{\lambda}^*(\mathcal{L}^su)(x)
  $$
  uniformly on $u\in \mathcal{S}(\H^n)$ and $x\in \H^n$, where $\Lambda(n,s)>0$ is a constant depending only on $n$ and $s$.
  \end{theorem}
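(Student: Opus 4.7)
The plan is to follow D'Ancona's Euclidean strategy from \cite{D}, adapted to the sub-Riemannian geometry of $\H^n$. Set $v:=\mathcal{L}^s u$, so that $u=\mathcal{L}^{-s}v$. The spectral identity
\[
\mu^{-2s}=\frac{1}{\Gamma(2s)}\int_0^\infty \rho^{2s-1}e^{-\rho\mu}\,d\rho,\qquad \mu>0,
\]
applied via the functional calculus to $\mu=\mathcal{L}^{1/2}$ yields
\[
u(x)=\frac{1}{\Gamma(2s)}\int_0^\infty \rho^{2s-1}V(x,\rho)\,d\rho,
\]
where $V(x,\rho):=e^{-\rho\mathcal{L}^{1/2}}v(x)$ is the non-conformal harmonic extension of $v$, i.e.\ the solution of \eqref{eq:epHnoc} for $s=1/2$ with boundary datum $v$. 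Subtracting gives the basic representation
\begin{equation}\label{eq:diffreppointwise}
u(xy^{-1})-u(x)=\frac{1}{\Gamma(2s)}\int_0^\infty \rho^{2s-1}\bigl[V(xy^{-1},\rho)-V(x,\rho)\bigr]\,d\rho.
\end{equation}

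The heart of the proof is a pointwise control of the increment $V(xy^{-1},\rho)-V(x,\rho)$ by an averaged gradient of $V$, carried out in two regimes. For $\rho\gtrsim|y|$, integrating along a horizontal curve joining $x$ to $xy^{-1}$ (whose sub-Riemannian length is comparable to the Koranyi norm $|y|$) and applying Cauchy--Schwarz along a tube of radius $\sim|y|$ around the curve yields a bound of the shape
\[
|V(xy^{-1},\rho)-V(x,\rho)|^2 \lesssim \frac{|y|^2}{|y|^Q}\int_{B(x,C|y|)}|\nabla V(\tilde x,\rho)|^2\,d\tilde x.
\]
For $\rho\lesssim|y|$, we use the decay $V(\cdot,\tau)\to 0$ as $\tau\to\infty$ to write $V(\cdot,\rho)=-\int_\rho^\infty \partial_\tau V(\cdot,\tau)\,d\tau$, reducing to the previous regime at scales $\tau\ge|y|$, combined with a direct pointwise estimate on $\partial_\tau V$ for $\tau\in[\rho,|y|]$. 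Inserting these bounds into \eqref{eq:diffreppointwise} and applying Cauchy--Schwarz in $\rho$ against a weight of the form $(\rho/(\rho+|y|))^{\alpha}$ for a suitable $\alpha=\alpha(\lambda,Q,s)$, we extract a prefactor $\sim|y|^{2s}$ that cancels the $|y|^{-(Q+4s)}$ singularity in the kernel of $\mathcal{D}_s u(x)^2$ down to $|y|^{-(Q+2s)}$.

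The final step integrates in $y$, performs the change of variable $\tilde x=xz^{-1}$ to rewrite the averaging ball $B(x,C|y|)$ as a $z$-integration, and interchanges the order of integration. After this reorganization, the integrand precisely matches the $g_\lambda^*$-density $(\rho/(\rho+|y|))^{\lambda Q}\rho^{1-Q}|\nabla V(xy^{-1},\rho)|^2$, and the threshold $\lambda<1+2s/Q$ emerges as the convergence condition of the resulting $y$-integral near $y=0$. The main obstacle is the averaged gradient estimate on the increment: since $\H^n$ has no straight lines, one must replace Euclidean segments with horizontal curves and invoke Poincar\'e-type inequalities on Heisenberg balls, together with interior-regularity estimates for the degenerate-elliptic operator $\partial_{\rho\rho}-\mathcal{L}$. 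These tools require the Heisenberg-group machinery developed in the preparatory sections of the paper, in particular the structure of the extension problem from \cite{FGMT}.
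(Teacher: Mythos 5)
Your blueprint coincides with the paper's up to a point: both are adaptations of D'Ancona's argument, and your subordination identity $u(x)=\frac{1}{\Gamma(2s)}\int_0^\infty\rho^{2s-1}V(x,\rho)\,d\rho$ is exactly the content of Lemma \ref{lem:identity} (at $\rho=0$, modulo normalization), so that part is sound. The genuine gap is the step you present as routine, namely the increment bound
$|V(xy^{-1},\rho)-V(x,\rho)|^2\lesssim |y|^{2-Q}\int_{B(x,C|y|)}|\nabla V(\tilde x,\rho)|^2\,d\tilde x$.
Integrating along a horizontal curve of length $\sim|y|$ and applying Cauchy--Schwarz yields only a one-dimensional curve integral of $|\nabla V(\cdot,\rho)|^2$; nothing in your argument thickens that curve integral into a solid average, and a Poincar\'e inequality cannot do it, because Poincar\'e controls averaged deviations from a mean, not the pointwise difference at two prescribed endpoints (a thin ridge of $|\nabla V|$ concentrated along the curve defeats the claimed bound). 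The legitimate mechanism for converting pointwise gradient values into solid averages is a sub-mean-value property, and this is precisely the nontrivial ingredient the paper has to build: $|\nabla F|^2$ is subharmonic for $E=\partial_{\rho\rho}-\mathcal{L}$, and Theorem \ref{thm:formulas} together with \eqref{eq:mean} give the mean value inequality over \emph{extended} balls in $\H^n\times\R^+$. Note moreover that the correct averaging is over a ball in both variables of radius comparable to the height (the distance to the boundary $\rho$), not over a fixed-height slice of radius $C|y|$; with the correct version the prefactor is $\rho^{-(Q+1)}$ rather than $|y|^{-Q}$, and your subsequent bookkeeping (extracting $|y|^{2s}$, matching the weight in \eqref{eq:gstar2}, and the threshold $\lambda<1+\frac{2s}{Q}$, which you assert ``emerges'' without computation) would have to be redone along the lines of the paper's estimates of $I$--$IV$, where the cone $C$, the set $D$, and the $\varepsilon$-loss in $\lambda=\frac1Q(Q+2s-\varepsilon)$ appear.

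The same missing ingredient resurfaces in your low-height regime $\rho\lesssim|y|$: the ``direct pointwise estimate on $\partial_\tau V$'' at the base point $x$ cannot by itself be dominated by the $g_\lambda^*$ density, since $g_\lambda^*(\mathcal{L}^su)(x)$ involves an integral of $|\nabla V(xy^{-1},\tau)|^2$ in $y$, and a value at the single point $y=e$ is not controlled by such an average without the sub-mean-value property; this is exactly how the paper treats its term $II$, via \eqref{eq:mean}. So your route is salvageable, but only by importing the Section \ref{sub:mvt} machinery (or an equivalent interior sup-by-$L^2$-average estimate for the hypoelliptic operator $E$ at scale $\rho$), at which point it essentially collapses onto the paper's proof; as written, the central tube estimate is both unjustified and stated in a form that is not the one these tools actually deliver.
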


\subsection*{Structure of the paper} We start gathering some well known facts about the Heisenberg group and fractional powers of the sublaplacian in Section \ref{sec:tool}. In Section \ref{sec:technical} we provide some technical results that will be needed to prove the main results. In particular, mapping properties for the square function and the nontangential square function are shown, and a mean value theorem for subharmonic functions on $\H^n\times \R^+$ is stated. Finally, the proofs of Theorems  \ref{thm:pointwise} and \ref{thm:main} are presented, respectively, in Sections \ref{sec:proofTh2} and \ref{sec:proofTh1}.

\subsection*{Acknowledgements} L. R. was partially supported by the Basque Government through the BERC 2018-2021 program, by the Spanish Ministry of Economy and Competitiveness through BCAM Severo Ochoa excellence accreditation SEV-2017-2018 and through project PID2020-113156GB-I00, by the project RYC2018-025477-I, and by Ikerbasque.

The authors whish to thank Adriano Pisante for addressing us some useful references about the mean value property related to hypoelliptic operators on Carnot groups, and Sundaram Thangavelu for helpful clarifications.
 
 \section{The Heisenberg group and fractional powers of the sublaplacian}
 \label{sec:tool}
 
 Let us first introduce some definitions and set up notations concerning the Heisenberg group. We refer the reader to the book of G. B. Folland \cite{Fo}, although we closely follow the notations used in \cite{STH}.  We also warn the reader that our notation and certain definitions may be slightly different from those used by others.
 
\subsection{Fourier transform on the Heisenberg group}
\label{sec:Heisenberg}

Let $\H^n=\C^n\times \R$ be the $(2n+1)$ dimensional Heisenberg group, which is the nilpotent Lie group of step two whose underlying manifold is $\R^{2n+1}$ equipped with the group law
$$
(z,t)(z',t')=\Big(z+z',t+t'+\frac12\Im z\cdot \overline{z'}\Big),
$$
where $z,z'\in \C^n$ and $t,t' \in \R$. 
Identifying $ \H^n $ with $ \R^{2n+1} $ and considering coordinates $ (x,u,t) $ we can write the group law as
\begin{equation}
\label{eq:symplectic}
(x,y,t)(x',y',t') = \Big(x+x',y+y',t+t'+\frac{1}{2}(x\cdot y'-x'\cdot y)\Big), 
\end{equation}
where $x,x',y,y'\in \R^n$ and $t,t' \in \R$. 
Note that  $ \Im \big((x+iy)\cdot(x'-iy')\big)= y\cdot x'-y'\cdot x = [(x,y)(x',y')] $ is the symplectic form on $ \R^{2n}$.

For each $\lambda\in \R^* = \R\setminus\{0\}$, we have an irreducible  unitary representation $\pi_{\lambda}$ of $\H^n$ realized on $ L^2(\R^n).$ The action of $ \pi_\lambda(z,t) $ on $ L^2(\R^n) $ is explicitly given by
$$
\pi_\lambda(z,t)\varphi(\xi) = e^{i\lambda t} e^{i(x\cdot \xi+\frac12 x\cdot y)}\varphi(\xi+y) 
$$
where $ \varphi \in L^2(\R^n) $ and $ z = x+iy$. By a theorem of Stone and Von Neumann, any irreducible unitary representation of $ \H^n $ which acts as
$ e^{i\lambda t} \operatorname{Id} $ at the center of the Heisenberg group is unitarily equivalent to $ \pi_\lambda $.  In view of this, there are representations of $ \H^n $ which are realized on the Fock spaces and equivalent to $ \pi_\lambda$. We will not use these representations and refer the reader to \cite{Fo} for details. 

The group Fourier transform of a function  $f\in L^1(\H^n)$ is the operator-valued function defined, for each $\lambda\in \R^*$, by
\begin{equation*}
\widehat{f}(\lambda) := \pi_{\lambda}(f)= \int_{\H^n}f(z,w)\pi_{\lambda}(z,w)\,dz\,dw.
\end{equation*}
With an abuse of language, we will call the group Fourier transform just the Fourier transform.
Observe that for each $\lambda$, $\widehat{f}(\lambda)$ is an operator acting on $L^2(\R^n)$. When $f\in L^1\cap L^2(\H^n)$, it can be shown that $\widehat{f}(\lambda)$ is a Hilbert-Schmidt operator and the Plancherel theorem holds
\begin{equation}
\label{eq:Plancherel}
\int_{\H^n}|f(z,t)|^2\,dz\,dt=\frac{2^{n-1}}{\pi^{n+1}}
\int_{-\infty}^{\infty}\|\widehat{f}(\lambda)\|_{\operatorname{HS}}^2|\lambda|^n\,d\lambda,
\end{equation}
where $\|\cdot\|_{\operatorname{HS}}$ is the Hilbert-Schmidt norm given by $\|T\|^2_{\operatorname{HS}}=\tr(T^* T)$, for $T$ a bounded operator, being $T^*$ the adjoint operator of $T$.
By polarizing the Plancherel identity we get the Parseval formula
$$
\int_{\H^n} f(z,t) \overline{g(z,t)} dz dt= \frac{2^{n-1}}{\pi^{n+1}}
\int_{-\infty}^{\infty}  \tr(\widehat{f}(\lambda)\widehat{g}(\lambda)^*) |\lambda|^n\,d\lambda.
$$

Let $ f^\lambda $ stand for the inverse Fourier transform of $ f $ in the \textit{central variable} $t$
\begin{equation*}
f^\lambda(z) = \int_{-\infty}^\infty f(z,t) e^{i\lambda t} \,dt.
\end{equation*}
By taking the Euclidean Fourier transform of $ f^\lambda(z)$  in the variable $\lambda$, we obtain
\begin{equation}
\label{eq:lambdaFT}
f(z,t)=\frac{1}{2\pi}\int_{-\infty}^{\infty}e^{-i\lambda t}f^{\lambda}(z)\,d\lambda.
\end{equation}
By the definition of $ \pi_\lambda(z,t) $ and $ \widehat{f}(\lambda) $ it is easy to see that
\begin{equation*}
\widehat{f}(\lambda) = \int_{\C^n} f^\lambda(z) \pi_\lambda(z,0) dz.
\end{equation*}
The operator which takes  a function $ g $ on $ \C^n $ into the operator
$$ \int_{\C^n} g(z) \pi_\lambda(z,0) dz $$ is called the Weyl transform of $ g $ and is denoted by $ W_\lambda(g)$. Thus $ \widehat{f}(\lambda) = W_\lambda(f^\lambda)$.

Let us recall that the convolution of $ f $ with $ g $ on $\H^n$ is defined by
$$
f*g(x) = \int_{\H^n} f(xy^{-1})g(y) \,dy, \quad x,y\in \H^n.
$$
 With  $x=(z,t)$ and $y=(z',t')$, in view of \eqref{eq:symplectic}, we have that $(z',t')=(-z',-t')$ and the above takes the form
$$
f*g(z,t) = \int_{\H^n} f\big((z,t)(-z',-t')\big)g(z',t') \,dz' \,dt'.
$$
A simple computation shows that
\begin{equation*}
(f*g)^\lambda(z) = \int_{\C^n} f^\lambda(z-z')g^\lambda(z') e^{\frac{i}{2}\Im(z\cdot \bar{z'})} \,dz'.
\end{equation*}
The convolution appearing on the right hand side is called the $ \lambda$-twisted convolution and is denoted by $ f^\lambda*_\lambda g^\lambda(z)$.

\subsection{Spectral theory in the Heisenberg group}
\label{sec:Hermite}

For $\lambda\in \R^*$ and $ \alpha \in \Na^n $, we introduce the family of Hermite functions
$$
\Phi_\alpha^\lambda(x) = |\lambda|^{\frac{n}{4}}\Phi_\alpha(\sqrt{|\lambda|}x), \quad x\in \R^n.
$$
Here, $\Phi_\alpha $ is the normalized Hermite function on $\R^n$ which is an eigenfunction of the Hermite operator $H = -\Delta+|x|^2 $ with eigenvalue $ (2|\alpha|+n)$, see \cite[Chapter 1.4]{STH}. The system is an orthonormal basis for $L^2(\R^n) $. In terms of $\Phi_\alpha^\lambda$ we have the identity
$$
\|\widehat{f}(\lambda)\|_{\operatorname{HS}}^2=\sum_{\alpha\in \mathbb{N}^n}\|\widehat{f}(\lambda)\Phi_\alpha^\lambda\|_{L^2}^2
$$ 
and hence Plancherel \eqref{eq:Plancherel} takes the form
$$
\int_{\H^n}|f(z,t)|^2\,dz\,dt=\frac{2^{n-1}}{\pi^{n+1}}
\int_{-\infty}^{\infty}\sum_{\alpha\in \mathbb{N}^n}\|\widehat{f}(\lambda)\Phi_\alpha^\lambda\|_{L^2}^2|\lambda|^n\,d\lambda.
$$

We can write the spectral decomposition of the scaled Hermite operator $ H(\lambda) =  -\Delta+|\lambda|^2 |x|^2 $ as
\begin{equation}
\label{eq:spectralHermite}
H(\lambda) = \sum_{k=0}^\infty (2k+n)|\lambda| P_k(\lambda), \quad \lambda \in \R^*,
\end{equation}
where $ P_k(\lambda) $ are the (finite-dimensional) orthogonal projections defined on $  L^2(\R^n)$ by
$$
P_k(\lambda)\varphi = \sum_{|\alpha| =k} (\varphi,\Phi_\alpha^\lambda) \Phi_\alpha^\lambda,
$$
where $ \varphi \in L^2(\R^n) $ and $(\cdot, \cdot)$ is the inner product in $L^2(\R^n)$.

On the other hand, we define the scaled Laguerre functions of type $ (n-1)$
\begin{equation*}
\varphi_k^\lambda(z) = L_k^{n-1}\Big(\frac12 |\lambda||z|^2\Big)e^{-\frac14 |\lambda||z|^2}.
\end{equation*}
Here $ L_k^{n-1} $ are the Laguerre polynomials of type $ (n-1)$, see \cite[Chapter 1.4] {STH} for the definition and properties. It happens that $\{\varphi_k^\lambda\}_{k=0}^{\infty}$ forms an orthogonal basis for the subspace consisting of radial functions in $ L^2(\C^n).$  

The so-called special Hermite expansion of a function
$ g $ defined on $ \C^n $ written in its compact form reads as
$$
g(z)=(2\pi)^{-n} |\lambda|^n \sum_{k=0}^\infty  g*_\lambda \varphi_k^\lambda(z).
$$
 The connection betweeen the Hermite projections $P_k(\lambda)$ and the Laguerre functions $\varphi_k^\lambda$, via the Weyl transform, is given by the following important formula
\begin{equation}
\label{eq:WeylLaguerre}
W_\lambda(\varphi_k^\lambda) = (2\pi)^n |\lambda|^{-n} P_k(\lambda).
\end{equation}
Observe that, in particular, for any function $f$ on $\H^n$, we have the expansion
\begin{equation*}
f^{\lambda}(z)=(2\pi)^{-n} |\lambda|^n \sum_{k=0}^\infty  f^{\lambda}*_\lambda \varphi_k^\lambda(z).
\end{equation*}

\subsection{The sublaplacian}
\label{sub:Sub}

Let us now define the sublaplacian on the Heisenberg group.
A basis for the Lie algebra $ \mathfrak{h}_n $ of left-invariant vector fields on $\H^n$ is given by
\begin{equation}
\label{vfields}
X_j = \frac{\partial}{\partial{x_j}}+\frac{1}{2}y_j \frac{\partial}{\partial t},\qquad Y_j = \frac{\partial}{\partial{y_j}}-\frac{1}{2}x_j \frac{\partial}{\partial t}, \quad j=1,2,\ldots, n, \qquad  T = \frac{\partial}{\partial t}.
\end{equation}
It is easily checked that the only non-trivial Lie brackets in $ \mathfrak{h}_n $ are given by $ [ X_j, Y_j] =T$ as all other brackets vanish.  The Kohn-Laplacian on $\H^n$ is the second order operator 
$$ 
\mathcal{L} = -\sum_{j=1}^n ( X_j^2 +Y_j^2), 
$$
known as the sublaplacian. It falls in the class of operators of the type \textit{sums of squares of vector fields}. Though not elliptic, this operator shares several properties with its counterpart $ \Delta $ on $ \R^n$. 

The group $ \H^n $  admits a family of automorphisms indexed by $ \R_+ $ and given by the non-isotropic \textit{Heisenberg dilations} 
$$ 
\delta_{\lambda} (z,t) = ( \lambda z, \lambda^2 t), \quad \lambda>0, \quad (z,t)\in \H^n. 
$$
A function $u: \H^n\to \R$ is said \textit{homogenous of degree $k\in \Z$} if for every $\lambda>0$
$$
u\circ \delta_{\lambda}=\lambda^k u.
$$
With respect to these dilations, the vector fields $ X_j, Y_j, T $ are homogeneous of degree one and $ \mathcal{L}$ is homogeneous of degree two. A fundamental solution $\Gamma(z,t)$ for $ \mathcal{L} $ is given by 
\begin{equation*}
\Gamma(z,t)=c_Q|(z,t)|^{-Q+2} 
\end{equation*}
where $ Q = 2n+2$ is the homogeneous dimension of $\H^n$, $ |(z,t)|^4 = |z|^4+16 t^2$, and $c_Q>0$ is a number depending only on $Q$. This was found by Folland \cite{F}, see also \cite{S}.  The distance function 
\begin{equation*}
d: (z,t) \mapsto |(z,t)| 
\end{equation*}
is the Koranyi norm, which is homogeneous of degree one with respect to the dilations $ \delta_{\lambda}$.

The spectral decomposition of the sublaplacian is achieved via the special Hermite expansion introduced in the previous subsection.  The action of the Fourier transform on functions of the form $\mathcal{L}f$ and $Tf$ are given by
$$
(\mathcal{L}f)^{\widehat{}}(\lambda)
= \widehat{f}(\lambda)H(\lambda), \qquad (Tf)^{\widehat{}}(\lambda)=-i\lambda\widehat{f}(\lambda).
$$
If $ L_\lambda $ is the operator defined by the relation $ (\mathcal{L}f)^\lambda = L_\lambda f^\lambda $ then it follows that
$$  W_\lambda(L_\lambda f^\lambda) =  W_\lambda(f^\lambda) H(\lambda). $$
Recalling the spectral decomposition of $ H(\lambda) $ given in \eqref{eq:spectralHermite} and the identity \eqref{eq:WeylLaguerre} we obtain
$$   L_\lambda f^\lambda(z) = (2\pi)^{-n} \sum_{k=0}^\infty (2k+n)|\lambda|  f^\lambda*_\lambda \varphi_k^\lambda(z).$$
Thus, by taking the Fourier transform in the variable $\lambda$ in  \eqref{eq:lambdaFT}, the spectral decomposition of the sublaplacian is given by
\begin{equation*}
\mathcal{L}f(z,t) = (2\pi)^{-n-1} \int_{-\infty}^\infty \Big( \sum_{k=0}^\infty (2k+n)|\lambda|  f^\lambda*_\lambda \varphi_k^\lambda(z)\Big) e^{-i\lambda t}  |\lambda|^n d\lambda.
\end{equation*}

\subsection{Fractional powers of the sublaplacian}
\label{sub:fractionalSub}

The fractional powers of the sublaplacian $\mathcal{L}^s$ defined in the introduction via the extension problem \eqref{eq:epHnoc} can be equivalently defined via the spectral decomposition
$$ \mathcal{L}^sf(z,t) = (2\pi)^{-n-1} \int_{-\infty}^\infty \Big( \sum_{k=0}^\infty \big((2k+n)|\lambda|\big)^s  f^\lambda*_\lambda \varphi_k^\lambda(z)\Big) e^{-i\lambda t}  |\lambda|^n d\lambda.$$
Note that $ (\widehat{\mathcal{L}^sf})(\lambda) = \widehat{f}(\lambda)H(\lambda)^s.$

On the other hand, the operators $ \mathcal{L}_s $ are also defined for $0\le s < (n+1) $ by
\begin{equation}
\label{eq:modifFract1}
\mathcal{L}_sf(z,t) = (2\pi)^{-n-1} \int_{-\infty}^\infty \Big( \sum_{k=0}^\infty (2|\lambda|)^s \frac{\Gamma(\frac{2k+n}{2}+\frac{1+s}{2})}{ \Gamma(\frac{2k+n}{2}+\frac{1-s}{2})} f^\lambda*_\lambda \varphi_k^\lambda(z)\Big) e^{-i\lambda t}|\lambda|^n d\lambda.
\end{equation} 
As mentioned in the introduction, the operators $\mathcal{L}_s$ occur naturally in the context of CR geometry and scattering theory on the Heisenberg group: when we identify $\H^n$ as the boundary of the Siegel's upper half space in $ \C^{n+1} $, they have the important property of being conformally invariant. 
In short, \eqref{eq:modifFract1} means that $ \mathcal{L}_s $ is the operator (see \cite[(1.33)]{BFM})
\begin{equation*}
\mathcal{L}_s:=(2|T|)^s\frac{\Gamma\big(\frac{\mathcal{L}}{2|T|}+\frac{1+s}{2}\big)}
{\Gamma\big(\frac{\mathcal{L}}{2|T|}+\frac{1-s}{2}\big)}.
\end{equation*}
Thus $\mathcal{L}_s$ corresponds to  the spectral multiplier
\begin{equation*}
(2|\lambda|)^s\frac{\Gamma\big(\frac{2k+n}{2}+\frac{1+s}{2}\big)}
{\Gamma\big(\frac{2k+n}{2}+\frac{1-s}{2}\big)}, \quad k\in \Na.
\end{equation*}
Note that $ \mathcal{L}_1 = \mathcal{L} $. It is known that $ \mathcal{L}_s$ also has an explicit fundamental solution, see e.g. \cite[page 530]{CH}, in contrast with $ \mathcal{L}^s $, whose fundamental solution cannot be written down explicitly. 

It can be checked that the operators $\mathcal{L}_s $ and $ \mathcal{L}^s $ are equivalent in $ L^p(\H^n) $, $1<p<\infty$, i.e.,
\begin{equation}
\label{eq:equiv}
c\|\mathcal{L}^sf\|_{L^p}\le \|\mathcal{L}_sf\|_{L^p}\le C \|\mathcal{L}^sf\|_{L^p},
\end{equation}
for some $c,C>0$.
Indeed, it suffices to prove that the operator $\mathcal{L}^{-s} \mathcal{L}_s$ is bounded on $L^p(\H^n)$. In order to conclude the latter, all we need to do is to check that the multiplier
$$   
M= \sum_{k=0}^\infty  (2k+n)^{-s} \frac{\Gamma((2k+n+1+s)/2)}{\Gamma((2k+n+1-s)/2)} P_k $$ 
is a Fourier multiplier on $ L^p(\H^n)$. In view of the known multiplier theorems (\cite{MS}, \cite[Theorem 2.6.1]{STH}) this amounts to check that the function (as a function of $k$)
\begin{equation*}
(2k+n)^{-s} \frac{\Gamma\big(\frac{2k+n+1+s}{2}\big)}{\Gamma\big(\frac{2k+n+1-s}{2}\big)}
\end{equation*}
and its $j$th-derivatives up to order $(n+1)$ are bounded by $C_j k^{-j}$ for $j=0,1,\ldots, (n+1)$, which is true in view of the known asymptotics for the ratio of gamma functions (see for instance \cite{OlMax})
$$
\frac{\Gamma(z+a)}{\Gamma(z+b)}\sim z^{a-b}\quad \text{ as } z\to \infty, \quad |\operatorname{ph} z|<\pi,
$$
and the asymptotics of polygamma function, involved in the derivatives of the Gamma function.

 \section{Toolbox}
 \label{sec:technical}
In this section we collect and study several ingredients that will be used to prove the main theorems, namely the extension problem and the integral representation associated with $\mathcal{L}_s$,
  mapping properties for the square function and the nontangential square function, and a mean value theorem for subharmonic functions on $\H^n\times \R^+$.
  
 \subsection{The extension problem and a bilinear form associated with $\mathcal{L}_s$}
 \label{subsec:bilin}

Let
\begin{equation} 
\label{eq:varfisr}
\varphi_{s,\rho}(z,t)  = \big((\rho^2+|z|^2)^2 +16 t^2\big)^{-\frac{n+1+s}{2}},
\end{equation} 
which is integrable on $\H^n$ for all $s>0$, and 
\begin{equation}
\label{eq:C1}
C(n,s)=\frac{4}{\pi^{n+1/2}}\frac{\Gamma(n+s)\Gamma\big(\frac{n+1+s}{2}\big)}{\Gamma(s)\Gamma(\frac{n+s}{2})}.
\end{equation}
 The following theorem, which provides a solution to the extension problem \eqref{eq:epH}, realizes $\mathcal{L}_sf$ as the Dirichlet-to-Neumann map associated to the extension problem, and shows a pointwise representation for $\mathcal{L}_s$, can be found in \cite[Theorem 1.2]{RTimrn} (actually, here we are stating a reduced version of the result therein), see also \cite{RT}. 

\begin{theorem} \cite[Theorem 1.2]{RTimrn}
\label{thm:sol}
Let $s>0$. Let $f\in L^p(\H^n)$, $1\le p<\infty$. Then, as $\rho\to0^+$,
\begin{equation}
\label{eq:convergfT}
w=C(n,s)\rho^{2s}f\ast \varphi_{s,\rho} \rightarrow  f \qquad \text{ in } L^p(\H^n),
\end{equation}
where $\varphi_{s,\rho}$ is defined in \eqref{eq:varfisr} and $C(n,s)$ is the one in \eqref{eq:C1}.
 If we further assume that $\mathcal{L}_sf\in L^p(\H^n)$ then
\begin{equation*} 
-\lim_{\rho\to 0^+}\rho^{1-2s} \partial_\rho (w(z,t,\rho)) = 2^{1-2s}\frac{\Gamma(1-s)}{\Gamma(s)} \mathcal{L}_s f(z,t).
\end{equation*}
Moreover, when $0<s<1/2$, we also have the pointwise representation
\begin{equation}
\label{eq:ir}
\mathcal{L}_sf(x)=b(n,s)\int_{\H^n}\frac{f(x)-f(y)}{|xy^{-1}|^{Q+2s}}\,dy
\end{equation}
for all $f\in C^1(\H^n)$ such that $X_jf, Y_jf, \partial_tf\in L^{\infty}(\H^n)$, $j=1,\ldots, n$, where
\begin{equation}
\label{eq:bns}
b(n,s):=\frac{4^{1+s} }{\pi^{n+1/2}}\frac{\Gamma(n+s)\Gamma\big(\frac{n+1+s}{2}\big)}{\Gamma\big(\frac{n+s}{2}\big)|\Gamma(-s)|}.
\end{equation}
\end{theorem}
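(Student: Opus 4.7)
The theorem splits into three claims: an $L^p$ approximation-to-the-identity statement, a Dirichlet-to-Neumann identification, and a pointwise integral representation valid only in the range $0<s<1/2$. I would prove them in this order, the earlier ones feeding into the later.

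\textbf{Stage 1.} I would first recognise $C(n,s)\rho^{2s}\varphi_{s,\rho}$ as an approximation of the identity adapted to the Heisenberg dilations. A direct check using $|(z,t)|^4=|z|^4+16t^2$ gives
$$
\rho^{2s}\varphi_{s,\rho}(z,t) = \rho^{-Q}\,\varphi_{s,1}\bigl(\delta_{1/\rho}(z,t)\bigr),
$$
so everything reduces to the single identity $C(n,s)\int_{\H^n}\varphi_{s,1}\,dz\,dt = 1$. This integrates by fixing $z$ and integrating in the central variable $t$ first (producing one Beta function), and then radially in $z\in\C^n$ (producing a second Beta function); the product matches the reciprocal of the constant in \eqref{eq:C1}. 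The $L^p$ convergence for $1\le p<\infty$ then follows from the standard theory of homogeneous approximations of the identity on $\H^n$.

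\textbf{Stage 2.} Differentiating $w = C(n,s)\rho^{2s}f\ast\varphi_{s,\rho}$ under the integral sign I would write
$$
\rho^{1-2s}\,\partial_\rho w(x,\rho) = C(n,s)\int_{\H^n}\bigl[f(xy^{-1}) - f(x)\bigr]\,\psi_{s,\rho}(y)\,dy,
$$
with $\psi_{s,\rho}(y) := 2s\,\varphi_{s,\rho}(y) + \rho\,\partial_\rho\varphi_{s,\rho}(y)$, since the $f(x)$-contribution cancels because $\int \psi_{s,\rho}\,dy = 0$ (a direct consequence of Stage 1, since $\rho\partial_\rho\rho^{-2s} = -2s\rho^{-2s}$). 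To identify the limit as $\rho\to 0^+$ I would move to the spectral side via the group Fourier transform and the Hermite projections of Subsection~\ref{sec:Hermite}: the extension equation \eqref{eq:epH} reduces on each Hermite eigenspace with eigenvalue $(2k+n)|\lambda|$ to the Whittaker-type ODE
$$
V'' + \frac{1-2s}{\rho}\,V' - \Bigl[(2k+n)|\lambda| + \tfrac14\rho^2\lambda^2\Bigr]V = 0,
$$
whose unique bounded-at-infinity solution has a small-$\rho$ expansion of the form $V(\rho) = a_0 + a_{2s}\rho^{2s} + O(\rho^2)$. The ratio $a_{2s}/a_0$ equals $-2^{1-2s}\Gamma(1-s)/\Gamma(s)$ times $(2|\lambda|)^s\,\Gamma\bigl(\tfrac{2k+n+1+s}{2}\bigr)/\Gamma\bigl(\tfrac{2k+n+1-s}{2}\bigr)$; recognising the second factor as the spectral multiplier of $\mathcal{L}_s$ from \eqref{eq:modifFract1} gives the claimed Dirichlet-to-Neumann identity.

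\textbf{Stage 3.} The restriction $s<1/2$ is exactly what allows dominated convergence inside the integral from Stage 2: for $f\in C^1(\H^n)$ with bounded $X_j f$, $Y_j f$, $\partial_t f$, a first-order Taylor estimate along the horizontal directions and the central variable on $\H^n$ bounds $|f(xy^{-1})-f(x)|$ by $C|y|$ near $y=0$, so the integrand of Stage 2 is globally integrable precisely when $2s<1$. Since $\psi_{s,\rho}(y)\to \psi_{s,0}(y)$ pointwise and differentiating \eqref{eq:varfisr} at $\rho=0$ shows that $\psi_{s,0}(y)$ is an explicit multiple of $|y|^{-(Q+2s)}$, one obtains the pointwise integral formula. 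The constant $b(n,s)$ of \eqref{eq:bns} is then pinned down by combining the prefactor $2^{2s-1}\Gamma(s)/\Gamma(1-s)$ from Stage 2, the derivative constant produced by $\varphi_{s,\rho}$, and the identities $\Gamma(s)\Gamma(1-s)=\pi/\sin\pi s$ and $|\Gamma(-s)|=\Gamma(1-s)/s$. The crux of the whole argument is Stage 2: one must identify the bounded-at-infinity solution of the Whittaker ODE explicitly via confluent hypergeometric functions and extract its small-$\rho$ expansion with the exact Gamma-function coefficients to match \eqref{eq:modifFract1}; Stages 1 and 3 are essentially routine once the correct normalisations are in hand.
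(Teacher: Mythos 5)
A point of order first: the paper does not prove this theorem at all --- it is imported verbatim from Roncal--Thangavelu \cite[Theorem 1.2]{RTimrn} (see also \cite{RT}), so there is no internal proof to compare you against; the comparison below is with the argument in that cited source. Your outline does follow essentially the same route. Stage~1 is correct: since $Q+2s=2(n+1+s)$ one indeed has $\rho^{2s}\varphi_{s,\rho}=\rho^{-Q}\,\varphi_{s,1}\circ\delta_{1/\rho}$, the two iterated Beta integrals reproduce exactly $1/C(n,s)$ from \eqref{eq:C1}, and approximate-identity theory on the homogeneous group gives \eqref{eq:convergfT}. Stage~2's reduction, after the $\lambda$-Fourier transform and the special Hermite (Laguerre) expansion, to the ODE $V''+\frac{1-2s}{\rho}V'-\bigl((2k+n)|\lambda|+\tfrac14\rho^2\lambda^2\bigr)V=0$ and the extraction of the Gamma-ratio multiplier of \eqref{eq:modifFract1} from the small-$\rho$ behaviour of the bounded solution is precisely the mechanism used in \cite{RTimrn,RT}.

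The genuine gap is the bridge you leave implicit in Stage~2. Knowing that the extension equation \eqref{eq:epH} becomes the above ODE on each eigenspace says nothing about $w=C(n,s)\rho^{2s}f\ast\varphi_{s,\rho}$ until you prove that the spectral components of this particular $w$ are the bounded-at-infinity solutions of that ODE with datum $\widehat{f}(\lambda)$. This requires either verifying directly that $w$ solves \eqref{eq:epH} together with a boundedness/uniqueness argument, or computing explicitly the Laguerre coefficients of $\varphi_{s,\rho}^{\lambda}$ and recognising a Whittaker function $W_{\kappa,\mu}\bigl(\tfrac12|\lambda|\rho^2\bigr)$; in \cite{RTimrn,RT} this is done through Cowling--Haagerup type integral formulas \cite{CH}, and it is exactly where the Gamma factors in \eqref{eq:C1} and \eqref{eq:bns} come from. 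You call this ``the crux'' but give no indication of how to carry it out, so as written the Dirichlet-to-Neumann identity (and hence the constant $b(n,s)$) is asserted rather than proved; note also that $\rho^{1-2s}\partial_\rho$ applied to $a_{2s}\rho^{2s}$ produces an extra factor $2s$, which your stated ratio $a_{2s}/a_0$ does not account for. A smaller caution in Stage~3: the bound $|f(xy^{-1})-f(x)|\lesssim|y|$ (which does follow from $X_jf,Y_jf\in L^\infty$ via the stratified mean value inequality) controls the integrand only near $y=0$, where $s<1/2$ enters; it does \emph{not} give integrability of $|y|^{1-Q-2s}$ at infinity for $s<1/2$, so your claim of global integrability ``precisely when $2s<1$'' is inaccurate, and the dominated-convergence step for large $|y|$ must be argued separately (boundedness of the increment or decay of the kernel difference), as is done in the cited source.
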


In view of Theorem \ref{thm:sol}, the function $C(n,s)\varphi_{s,\rho}$ is understood as a generalized conformal Poisson kernel, which is a solution to a generalized conformal harmonic extension. Observe that, when $s=1/2$ in \eqref{eq:epH}, we are reduced to the (conformal) harmonic extension
\begin{equation*}
\big(\partial_{\rho\rho}+\frac14\rho^2\partial_{tt}-\mathcal{L}\big)U(z,t,\rho)=0  \qquad \lim_{\rho\to 0}U(z,t,\rho)=f(z,t)\quad  \text{ in } \H^n\times \R^+,
\end{equation*}
so that the function $
U=C(n,1/2)f\ast \varphi_{1/2,\rho}$ in \eqref{eq:convergfT} is the Poisson semigroup. We will denote it as $U(z,t,\rho)=e^{-\rho\mathcal{L}_{1/2}}f(z,t)$. 

On the other hand, when $s=1/2$, the extension problem \eqref{eq:epHnoc} takes the form
\begin{equation}
\label{eq:he}
\big(\partial_{\rho}^2-\mathcal{L}\big)U=0 \quad \text{ in } \H^n\times \R^+, \qquad
U(z,t,0)=u(z,t), \qquad\text{ in } \H^n.
\end{equation}
We will denote $U(x,\rho):=e^{-\rho\mathcal{L}^{1/2}}u(x)$. There is not an explicit expression for the solution of this problem, however the subordination formula
$$
e^{-\rho\mathcal{L}^{1/2}}=\rho\int_0^{\infty}(4\pi w)^{-\frac12}e^{-\frac{\rho^2}{4w}}e^{-w\mathcal{L}}\,dw
$$
allows to write the non-conformal Poisson kernel $P_{\rho}(x)$ as 
$$
P_\rho(x)=\rho\int_0^{\infty}(4\pi w)^{-\frac12}e^{-\frac{\rho^2}{4w}}q_w(x)\,dw
$$
and the known sharp estimates for the heat kernel $q_w(z,t)$ (see e.g. \cite[Proposition 2.8.2]{STU}) yield the following sharp estimates for the Poisson kernel (see also \cite[Theorem 6.11 (i)]{ADY})
\begin{equation}
\label{eq:Pestim}
P_\rho(x)\le C_n\frac{\rho}{(\rho^2+|x|^2)^{\frac{Q+1}{2}}}.
\end{equation}
The latter will be crucial to prove mapping properties of the square function operators given in the next subsection.

 \subsection{The square functions $g$, $g_{\lambda}^*$}

Recall that we are letting  
$$
\nabla =(X_1, \ldots, X_{n}, Y_1,\ldots, Y_{n}, \partial_{\rho}).
$$ 
We define the \textit{square $g$-function} by
$$
g(u)(x)=\Big(\int_0^{\infty}|\nabla U(x,\rho)|^2 \rho\,d\rho\Big)^{1/2}
$$
and the \textit{Littlewood nontangential square function} $g_{\lambda}^*$ as
\begin{equation}
\label{eq:gstar2}
g_{\lambda}^*(u)(x):=\Big(\int_0^{\infty}\int_{\H^n}\Big(\frac{\rho}{\rho+|y|}\Big)^{\lambda Q}\rho^{1-Q}|\nabla U(xy^{-1},\rho)|^2\,dy\,d\rho\Big)^{1/2},
\end{equation}
where $U(x,\rho)$ is the non-conformal harmonic extension of $u(x)$ in the upper half space in \eqref{eq:he}. 
In \cite[Chapter 2.6]{STH}, Thangavelu defined $g$ and $g^*$ functions in terms of the heat semigroup and proved $L^p$ mapping properties for these operators. 

For $g$, the following basic result can be proven as in the Euclidean case, see \cite[Chapter 4, \S 1]{St}. 
We can equip $\H^n$ with a metric induced by the Koranyi norm which makes it a space of homogeneous type. On such spaces there is a well defined notion of dyadic cubes and grids with properties similar to their counterparts in the Euclidean setting. 
Given $1<p<\infty$, by $A_p$ we will denote the Muckenhoupt class of weights in $\H^n$, namely all nonnegative functions $w\in L^1_{\operatorname{loc}}(\H^n)$ such that
$$
\sup_{Q}\Big(\frac{1}{|Q|}\int_Qw(x)\,dx\Big)\Big(\frac{1}{|Q|}\int_Qw(x)^{-p'/p}\,dx\Big)^{p/p'}<\infty
$$ 
where the supremum is taken over all cubes $Q\in \H^n$.
 \begin{theorem}
  \label{thm:g}
  Let $n\ge1$ and $w\in A_p$. For any  $u\in L^p(w):= L^p(\H^n,w)$ we have, for $1<p<\infty$,
  $$
 c_p\|u\|_{L^p(w)}\le \|g(u)\|_{L^p(w)}\le C_p \|u\|_{L^p(w)}.
  $$
  \end{theorem}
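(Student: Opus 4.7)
The plan is to follow the Euclidean Littlewood-Paley template of Stein \cite[Chapter IV]{St}, now transplanted to the space of homogeneous type $(\H^n,d,dx)$, where $d$ is the Koranyi quasi-distance. I split the proof into an upper and a lower bound; the main input is the sharp size/derivative behaviour of the non-conformal Poisson kernel $P_\rho$ associated to \eqref{eq:he}.

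For the upper bound I would view $u\mapsto \nabla U(\cdot,\rho)$ as a vector-valued singular integral, taking values in the Hilbert space $\Hc=L^2(\R^+,\rho\,d\rho)$, with operator-valued kernel $K(x,y)(\rho)=\nabla P_\rho(xy^{-1})$. The steps are: (i) verify the unweighted $L^2$ bound via spectral calculus applied to $U=e^{-\rho\mathcal{L}^{1/2}}u$; writing $\sigma=\mathcal{L}^{1/2}$ one has $\int_0^{\infty}\sigma^2 e^{-2\rho\sigma}\rho\,d\rho=1/4$ and therefore $\|\partial_\rho U\|_{L^2(\R^+,\rho\,d\rho;L^2(\H^n))}^2=\tfrac14\|u\|_{L^2}^2$; the horizontal derivatives contribute the same kind of integral thanks to $\sum_j(\|X_j e^{-\rho\sigma}u\|_2^2+\|Y_j e^{-\rho\sigma}u\|_2^2)=\|\sigma e^{-\rho\sigma}u\|_2^2$; (ii) verify that $\|K(x,y)\|_{\Hc}\lesssim |xy^{-1}|^{-Q}$ and the standard H\"ormander smoothness condition with respect to $y$, starting from the size estimate \eqref{eq:Pestim} and from the companion derivative estimate $|\nabla P_\rho(x)|\lesssim (\rho+|x|)^{-Q-1}$, which follows by differentiating the subordination formula
\begin{equation*}
P_\rho=\rho\int_0^{\infty}(4\pi w)^{-1/2}e^{-\rho^2/4w}q_w\,dw
\end{equation*}
under the integral (legitimate by left-invariance of $X_j,Y_j$) and using sharp pointwise and gradient estimates for the heat kernel $q_w$ on $\H^n$ (see \cite{STH}); (iii) invoke the vector-valued Calder\'on-Zygmund theorem on spaces of homogeneous type with Muckenhoupt weights to conclude $\|g(u)\|_{L^p(w)}\lesssim \|u\|_{L^p(w)}$ for every $w\in A_p$ and $1<p<\infty$.

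For the lower bound I would first settle the $L^2$ case by polarizing the spectral identity above: for $u,v\in \mathcal{S}(\H^n)$,
\begin{equation*}
\langle u,v\rangle_{L^2}=4\int_{\H^n}\int_0^{\infty}\partial_\rho U(x,\rho)\,\overline{\partial_\rho V(x,\rho)}\,\rho\,d\rho\,dx,
\end{equation*}
whence Cauchy-Schwarz gives $\|u\|_{L^2}\lesssim \|g(u)\|_{L^2}$. To pass to a general $p$ and weight $w\in A_p$, I would use a weighted duality argument: fix $v\in \mathcal{S}(\H^n)$ with $\|v\|_{L^{p'}(w^{1-p'})}\le 1$; by polarization and Cauchy-Schwarz,
\begin{equation*}
|\langle u,v\rangle_{L^2}|\lesssim \int_{\H^n}g(u)(x)g(v)(x)\,dx\le \|g(u)\|_{L^p(w)}\|g(v)\|_{L^{p'}(w^{1-p'})},
\end{equation*}
and since $w\in A_p$ is equivalent to $w^{1-p'}\in A_{p'}$, the already proven upper bound applies to $g(v)$ to give $\|g(v)\|_{L^{p'}(w^{1-p'})}\lesssim 1$. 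Taking the supremum over admissible $v$ yields $\|u\|_{L^p(w)}\lesssim \|g(u)\|_{L^p(w)}$.

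The main obstacle is step (ii): the size and, above all, the H\"ormander smoothness of $K(x,y)$ with respect to the Heisenberg group law. While the size bound \eqref{eq:Pestim} is already recorded, the required left-invariant gradient bounds for $P_\rho$ have to be extracted from the subordination formula combined with the sharp Gaussian-type estimates for $q_w$ and its horizontal derivatives on $\H^n$. Once these estimates are in hand, the vector-valued Calder\'on-Zygmund theory on spaces of homogeneous type together with the standard $A_p$-duality reduction makes the remainder of the argument entirely routine.
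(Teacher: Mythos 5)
Your proposal is correct and follows essentially the same route as the paper: the spectral/Plancherel computation giving the exact $L^2$ identity, the upper bound via the Poisson kernel estimate \eqref{eq:Pestim} and vector-valued Calder\'on--Zygmund theory on spaces of homogeneous type (the paper cites \cite{GLY, Ruiz-Torrea}), and the lower bound by polarization plus duality with the dual weight $w^{1-p'}\in A_{p'}$ (the paper refers to \cite[Section 6]{CR} for exactly this argument). The only place you go slightly beyond the paper's sketch is in spelling out the H\"ormander regularity of the $\Hc$-valued kernel via gradient bounds for $P_\rho$ obtained from the subordination formula and heat kernel estimates, which is consistent with, and fills in, what the paper leaves to the cited references.
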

  
  \begin{proof}
  The proof follows classical arguments, we point out the main steps. Observe that $|\nabla U(x,\rho)|^2=|\partial_{\rho}U|^2+|\nabla_x U(x,\rho)|^2$, where $|\nabla_x U(x,\rho)|^2=\sum_{j=1}^n(|X_jU|^2+|Y_jU|^2)$. It will be appropriate to introduce the following two partial $g$-functions, namely
  \begin{equation*}
g_1(u)(x)=\Big(\int_0^{\infty}|\partial_{\rho}U(x,\rho)|^2 \rho\,d\rho\Big)^{1/2}, \qquad g_x(u)(x)=\Big(\int_0^{\infty}|\nabla_x U(x,\rho)|^2 \rho\,d\rho\Big)^{1/2}.
  \end{equation*}
  Note that $g^2=g_1^2+g_x^2$.
  
  Let us focus on the $L^2$ estimate for $g_1$. Applying Plancherel theorem for the Fourier transform on $\H^n$, we get (call $U(x,\rho)=:U_{\rho}(x)$)
  $$
  \|g_1(u)\|_{L^2}^2=\int_0^{\infty}\int_{\H^n}|\partial_{\rho}U(x,\rho)|^2 \rho\,dx\,d\rho=\frac{2^{n-1}}{\pi^{n+1}}\int_0^{\infty}\int_{-\infty}^{\infty}\|\widehat{(\partial_{\rho}U_{\rho})}(\lambda)\|^2_{\operatorname{HS}}|\lambda|^n \,d\lambda\, \rho\,d\rho.
  $$
  We also have
  $$
  \widehat{(\partial_{\rho}U_{\rho})}(\lambda)=-\widehat{u}(\lambda)H(\lambda)^{1/2}e^{-\rho H(\lambda)^{1/2}}
  $$
  and so the squared Hilbert-Schmidt norm is given by the sum
  $$
  \sum_{\alpha\in \mathbb{N}^n}(2|\alpha|+n)|\lambda|e^{-2\rho ((2|\alpha|+n)|\lambda|)^{1/2}}\|\widehat{u}(\lambda)\Phi_{\alpha}^{\lambda}\|_{L^2}^2.
  $$
  Integrating the above with respect to $\rho \,d\rho$, we get
  \begin{equation}
  \label{eq:g1L2}
   \|g_1(u)\|_{L^2}^2=\frac14\frac{2^{n-1}}{\pi^{n+1}}\int_{-\infty}^{\infty}\|\widehat{u}(\lambda)\|_{\operatorname{HS}}^2|\lambda|^n\,d\lambda=\frac14\|u\|_{L^2}^2.
  \end{equation}
  
Now, for $p\neq 2$, the converse inequality $\|u\|_{L^p(w)}\le c\|g(u)\|_{L^p(w)}$ can be derived with a polarization argument from the $L^2$ identity \eqref{eq:g1L2} involving the weight and its dual, as in \cite[Section 6]{CR} and the fact that $g_1(x)\le g(x)$ implies $\|g_1\|_{L^p(w)}\le \|g\|_{L^p(w)}$.

The inequality $\|g(u)\|_{L^p(w)}\le C\|u\|_{L^p(w)}$ for $1<p<\infty$ follows as in \cite[Chapter 4, \S 1]{St} by using the estimates for the Poisson kernel \eqref{eq:Pestim} and the theory of vector-valued operators in spaces of homogeneous type \cite{GLY, Ruiz-Torrea}. 
  \end{proof}
We will also need to prove estimates for $g_{\lambda}^*$.
  \begin{theorem}
  \label{thm:gstar}
  Let $n\ge1$ and $\lambda>1$. For any  $u\in L^p(\H^n)$ we have, for $1<p<\infty$,
  $$
  \|g_{\lambda}^{*}(u)\|_{L^p}\lesssim \|u\|_{L^p}.
  $$
for $\lambda>\max\big\{1,\frac2p\big\}$.
  \end{theorem}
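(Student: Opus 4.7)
I would follow the classical Euclidean blueprint of \cite[Chapter~IV]{St}, transported to $\H^n$ via Theorem~\ref{thm:g} and the Poisson-kernel estimate \eqref{eq:Pestim}, splitting into three regimes: $p=2$, $p>2$, and $1<p<2$.

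\emph{Case $p=2$.} Interchange the order of integration in \eqref{eq:gstar2} and use the left-translation invariance of Haar measure on $\H^n$ to reduce
\begin{equation*}
\|g_\lambda^*(u)\|_{L^2}^2
= \int_0^\infty \rho^{1-Q}\|\nabla U(\cdot,\rho)\|_{L^2}^2 \Big(\int_{\H^n}\Big(\tfrac{\rho}{\rho+|y|}\Big)^{\lambda Q}dy\Big)d\rho.
\end{equation*}
A dyadic annular decomposition of $\H^n$ together with the Heisenberg ball volume $|B(0,r)|\asymp r^Q$ turns the inner $y$-integral into a geometric series of ratio $2^{(1-\lambda)Q}$, convergent exactly when $\lambda>1$ and equal to $C\rho^Q$. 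What remains is $C\|g(u)\|_{L^2}^2\lesssim \|u\|_{L^2}^2$ by Theorem~\ref{thm:g}.

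\emph{Case $p>2$.} I would argue by duality on $L^{p/2}$. The key is the bilinear inequality
\begin{equation*}
\int_{\H^n} g_\lambda^*(u)(x)^2\,\varphi(x)\,dx \le C\int_{\H^n} g(u)(x)^2\,M\varphi(x)\,dx,\qquad \varphi\ge 0,
\end{equation*}
with $M$ the Heisenberg Hardy--Littlewood maximal operator. It follows from Fubini, the change of variables $z=xy^{-1}$, and the estimate $\int_{\H^n}(\rho/(\rho+|y|))^{\lambda Q}\varphi(zy)\,dy \lesssim \rho^{Q}M\varphi(z)$ (same dyadic argument as above, convergent iff $\lambda>1$). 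Writing $\|g_\lambda^*(u)\|_{L^p}^2=\|g_\lambda^*(u)^2\|_{L^{p/2}}$, dualising against $L^{(p/2)'}$, and combining Theorem~\ref{thm:g} with the $L^{(p/2)'}$-boundedness of $M$ (which requires $p>2$) closes this case.

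\emph{Case $1<p<2$.} Here the sharper threshold $\lambda>2/p$ becomes binding, and this is the main obstacle. My plan is to view the map $u\mapsto\big((y,\rho)\mapsto\nabla U(\cdot\, y^{-1},\rho)\big)$ as a convolution operator with a Hilbert-space-valued kernel $K(\xi)=(\nabla P_\rho(\xi y^{-1}))_{(y,\rho)}$ taking values in $\mathcal{H}=L^2(\H^n\times\R^+;(\rho/(\rho+|y|))^{\lambda Q}\rho^{1-Q}\,dy\,d\rho)$, so that $\|K\ast u(x)\|_{\mathcal{H}}=g_\lambda^*(u)(x)$ and the $p=2$ case already provides the $L^2\to L^2(\mathcal{H})$ bound. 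One then verifies the size and H\"ormander conditions on $K$ using \eqref{eq:Pestim} and differentiated analogues for $X_jP_\rho$, $Y_jP_\rho$, $\partial_\rho P_\rho$ derived from the subordination formula and sharp Heisenberg heat-kernel bounds, and invokes vector-valued Calder\'on--Zygmund theory on the space of homogeneous type $\H^n$ (\cite{GLY,Ruiz-Torrea}). The restriction $\lambda>2/p$ is expected to surface in the $\mathcal{H}$-integration of the difference $K(\xi x^{-1})-K(\xi)$, balancing the weight's decay against the kernel's singularity at $\xi=0$; sharpness should mirror the Euclidean picture of \cite{St}. The principal technical hurdle is establishing the anisotropic gradient bounds for the non-conformal Heisenberg Poisson kernel with scaling precise enough to produce exactly the threshold $2/p$.
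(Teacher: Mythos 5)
Your cases $p=2$ and $p>2$ are fine and essentially reproduce the paper's argument (duality on $L^{p/2}$, the bilinear bound $\int g_\lambda^*(u)^2\varphi\lesssim\int g(u)^2 M\varphi$ via the dyadic estimate of the weight, then Theorem \ref{thm:g} and the maximal theorem). The genuine gap is the case $1<p<2$. A vector-valued Calder\'on--Zygmund argument cannot produce the $p$-dependent threshold $\lambda>\frac2p$: the H\"ormander and size conditions are conditions on the kernel alone, for a fixed $\lambda$, so if they held for some $\lambda\in(1,2)$ the CZ theorem would give boundedness on $L^p$ for the whole range $1<p\le 2$, contradicting the sharp failure of $g_\lambda^*$ on $L^p$ for $p<2/\lambda$ (the Euclidean counterexamples are local and transfer to $\H^n$). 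And in fact the hypotheses do fail: with $\mathcal{H}=L^2\big(\H^n\times\R^+;(\rho/(\rho+|y|))^{\lambda Q}\rho^{1-Q}dy\,d\rho\big)$ one has, from $|\nabla P_\rho(w)|\approx(\rho+|w|)^{-Q-1}$ near the diagonal,
\begin{equation*}
\|K(x)\|_{\mathcal{H}}^2\gtrsim |x|^{-\lambda Q}\int_0^{c|x|}\rho^{(\lambda-2)Q-1}\,d\rho=\infty\qquad\text{whenever }\lambda<2,
\end{equation*}
coming from the region $y\approx x$, $\rho\to0$; so for the relevant range $1<\lambda<2$ the kernel is not even pointwise in $\mathcal{H}$, the size condition $\|K(x)\|_{\mathcal{H}}\lesssim|x|^{-Q}$ is violated, and the smoothness of the difference $K(\xi x^{-1})-K(\xi)$ cannot rescue the scheme. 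Finiteness of $g_\lambda^*(u)$ relies on the smoothing of $U=P_\rho\ast u$, not on integrability of the kernel, so the convolution framing loses exactly the mechanism that makes the operator bounded.

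What the paper does instead (following Stein, Chapter IV) is an argument in which the exponent $p$ enters the pointwise estimates: since $U$ is harmonic for $\partial_\rho^2-\mathcal{L}$, one uses $(\partial_\rho^2-\mathcal{L})U^p=p(p-1)U^{p-2}|\nabla U|^2$ to write $(g_\lambda^*(u)(x))^2\le C(M_\mu u(x))^{2-p}I^*(x)$, where the factor $U^{2-p}$ has been absorbed through the pointwise bound $|U(xy^{-1},\rho)|\lesssim(1+|y|/\rho)^{Q/\mu}M_\mu u(x)$ (a consequence of \eqref{eq:Pestim}), which trades part of the weight $(\rho/(\rho+|y|))^{\lambda Q}$ for a smaller exponent $\lambda'>1$; the requirement $\lambda'>1$ together with $\mu<p$ (needed for $\|M_\mu u\|_{L^p}\lesssim\|u\|_{L^p}$) is precisely where $\lambda>\frac2p$ appears. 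A Green-type identity $\int_0^\infty\int_{\H^n}\rho\,(\partial_\rho^2-\mathcal{L})U^p\,dy\,d\rho=\int_{\H^n}u^p\,dy$ gives $\int I^*\lesssim\|u\|_{L^p}^p$, and H\"older with exponents $r=2/p$, $r'$ concludes. You would need to replace your third step by this (or an equivalent $p$-sensitive) argument; as written, the proposal does not prove the theorem for $1<p<2$.
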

  
  \begin{proof}  The proof of Theorem \ref{thm:gstar} also follows the lines of the corresponding Euclidean result, see \cite[Chapter 4, \S 2]{St}. We detail the pertinent ingredients.
 
  We start with the case $p\ge2$, where only the hypothesis $\lambda>1$ is relevant. We can write
  $$
  \|g_{\lambda}^*(u)\|_{L^p}^2=\sup_{\|\psi\|_{L^{(p/2)'}}\le 1}\int_0^{\infty}\int_{\H^n}\rho|\nabla U(y,\rho)|^2\Big(\int_{\H^n}\Big(\frac{\rho}{\rho +|x^{-1}y|}\Big)^{\lambda Q}\rho^{-Q}\psi(x)\,dx\Big)\,dy\,d\rho.
  $$
  On the other hand
  $$
 \sup_{\rho>0} \int_{\H^n}\Big(\frac{\rho}{\rho  +|x^{-1}y|}\Big)^{\lambda Q}\rho^{-Q}\psi(x)\,dx\le C\sup_{\rho>0}(\psi\ast \varphi_{\rho})(y),
  $$
  where  $\varphi_{\rho}(x)=\rho^{-Q}\varphi(x/\rho)$ with $\varphi(x)=(1+|x|)^{-\lambda Q}$. It can be proved, analogously as in \cite[Theorem 2, \S2.2 of Chapter III]{St}, that
 $$
  \sup_{\rho>0}(\psi\ast \varphi_{\rho})(y)\le C M\psi(y)
  $$
where, for $f\in L_{\operatorname{loc}}^1(\H^n)$, $M$ is the centered Hardy-Littlewood maximal function given by 
$$
Mf(z,t)=\sup_{r>0}\frac{1}{|B((z,t),r)|}\int_{B((z,t),r)}|f(z',t')|\,dz', \quad  (z,t)\in \H^n.
$$
Here $B((z,t),r)$ denotes the open ball with  center $(z,t)$ and radius $r$ induced by the Kor\'anyi norm $d$ and, for a measurable set $A$, we denote the volume by $|A|$. 
 Observe that the Hardy-Littlewood maximal operator is bounded in $L^p(\H^n)$ for $1<p<\infty$. This follows from general results on a space of homogeneous type in the sense of Coifman and Weiss \cite{CW}. 
  Therefore, 
$$
  \|g_{\lambda}^*(u)\|_{L^p}^2\le C_{\lambda}\int_{\H^n}|g(u)(y)|^2 M\psi(y)\,dy\le C_{\lambda} \|g(u)\|_{L^p}^2\|M\psi\|_{L^{(p/2)'}}\le C_{\lambda}\|f\|_{L^p}^2.
$$
 Here we have used Theorem \ref{thm:g} and the boundedness of the maximal function. 
 
Let us move to the case $p<2$.  Let $\mu\ge 1$ and write the following variant of the maximal function
$$
M_{\mu}f(z,t)=\Big(\sup_{r>0}\frac{1}{|B((z,t),r)|}\int_{B((z,t),r)}|f(z',t')|^{\mu}\,dz'\Big)^{1/\mu}, \quad  (z,t)\in \H^n
$$
for which the following holds
$$
\|M_{\mu}f\|_{L^p}\le C_{p,\mu}\|f\|_{L^p}, \quad p>\mu.
$$
We also have the estimate
\begin{equation*}
|U(x^{-1}y,\rho)|\le C_{\mu}\Big(1+\frac{|y|}{\rho}\Big)^{Q/\mu}M_{\mu}f(x).
\end{equation*}
The above follows analogously as in \cite[Chapter IV, Lemma 4]{St} with the help of the bound for the Poisson kernel in \eqref{eq:Pestim}. With this, proceeding as in \cite[\S 2.1]{St}, we have
$$
(g_{\lambda}^*(u)(x))^2=\frac{1}{p(p-1)}\int_0^{\infty}\int_{\H^n}\Big(\frac{\rho}{\rho+|y|}\Big)^{\lambda Q}\rho^{1-Q}U^{2-p}|\mathcal{L}U^p|\,dy\,d\rho\le C_{\mu}^{2-p}(M_{\mu}u(x))^{2-p}I^*(x),
$$
with
$$
I^*(x)=\int_0^{\infty}\int_{\H^n}\Big(\frac{\rho}{\rho+|y|}\Big)^{\lambda' Q}\rho^{1-Q}U^{2-p}\mathcal{L}U^p(x^{-1}y,\rho)\,dy\,d\rho.
$$
Observe that 
\begin{align*}
\int_{\H^n}I^*(x)\,dx&=\int_0^{\infty}\int_{\H^n}\int_{\H^n}\Big(\frac{\rho}{\rho+|xy^{-1}|}\Big)^{\lambda' Q}\rho^{1-Q}\mathcal{L}U^p(y,\rho)\,dx\,dy\,d\rho\\
&=C_{\lambda'}\int_0^{\infty}\int_{\H^n}\rho\mathcal{L}U^p(y,\rho)\,dy\,d\rho
\end{align*}
where in the last step we used that, for $\lambda'>1$,
$$
\rho^{-Q}\int_{\H^n}\Big(\frac{\rho}{\rho+|x|}\Big)^{\lambda' Q}\,dx=\int_{\H^n}\Big(\frac{1}{1+|x|}\Big)^{\lambda' Q}\,dx=C_{\lambda'}<\infty.
$$
It is easy to check that an analogous to \cite[Lemma 2]{St} also holds in our context, namely
$$
\int_0^{\infty}\int_{\H^n}\rho\mathcal{L}U^p(y,\rho)\,dy\,d\rho)=\int_{\H^n}U^p(y,0)\,dy.
$$
Gathering all the ingredients above, we infer that
\begin{equation}
\label{eq:Is}
\int_{\H^n}I^*(x)\,dx=C_{\lambda'}\|u\|_{L^p}^p.
\end{equation}
Finally, by H\"older's inequality with exponents $r$ and $r'$, $1<r<2$ (which is possible by the fact that $\big(\frac{2-p}{2}\big)pr'=p$ and $rp/2=1$ if $r=2/p$)
\begin{align*}
\int_{\H^n}(g_{\lambda}^*(u)(x))^p\,dx&\le C\int_{\H^n}(M_{\mu}u(x))^{p(2-p)/2}(I^*(x))^{p/2}\,dx\\
&\le C\Big(\int_{\H^n}(M_{\mu}u(x))^{p}\,dx\Big)^{1/r'}\Big(\int_{\H^n}I^*(x)\,dx\Big)^{1/r}.
\end{align*}
The conclusion follows in view of the boundedness of the maximal operator $M_{\mu}$ and \eqref{eq:Is}.
   \end{proof}

With the corresponding modifications in the proof, a weighted version of Theorem \ref{thm:gstar} can be also obtained, (or by adapting the proof in \cite[Corollary, p. 110]{MW} to an space of homogeneous type, taking into account the estimates for the Poisson kernel \ref{eq:Pestim}), using the weighted boundedness for the Hardy-Littlewood maximal operator on spaces of homogeneous type (e.g. \cite{AM}).

  \begin{theorem}
  \label{thm:gstarw}
  Let $n\ge1$ and $\lambda>1$. For any  $u\in L^p(\H^n)$ we have, for $1<p<\infty$, and $w\in A_{\min\{p,\frac{p\lambda}{2}\}}$,
  $$
  \|g_{\lambda}^{*}(u)\|_{L^p(w)}\lesssim \|u\|_{L^p(w)}.
  $$
for $\lambda>\max\big\{1,\frac2p\big\}$.
  \end{theorem}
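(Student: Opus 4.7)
The plan is to adapt the unweighted proof of Theorem \ref{thm:gstar} to the weighted setting, splitting into two regimes according to whether $p\ge 2$ or $p<2$, and replacing each ingredient by its weighted counterpart on the space of homogeneous type $\H^n$. Weighted $L^p$ bounds for the Hardy--Littlewood maximal function on $\H^n$ are available via \cite{AM}, and the weighted $g$-function bound is already in Theorem \ref{thm:g}.

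\textbf{Case $p\ge 2$ (here $\min\{p,p\lambda/2\}=p$ when $\lambda\ge 2$, else the condition is stronger).} I would dualize as in the unweighted proof: write
$$
\|g_{\lambda}^{*}(u)\|_{L^p(w)}^{2}=\sup_{h\ge 0}\int_{\H^n}(g_{\lambda}^{*}(u))^{2}\,h\,w\,dx,
$$
with $\|h\|_{L^{(p/2)'}(w)}\le 1$. After Fubini the inner integral becomes
$$
\int_{\H^n}\Big(\frac{\rho}{\rho+|x^{-1}y|}\Big)^{\lambda Q}\rho^{-Q}h(x)w(x)\,dx\lesssim M(hw)(y),
$$
where the pointwise bound is obtained exactly as in the unweighted case thanks to the radial decay $\lambda Q > Q$. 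Writing $M(hw)/w$ and applying H\"older with the pair $L^{p/2}(w)$, $L^{(p/2)'}(w)$, I am reduced to the weighted $g$-bound (Theorem \ref{thm:g}, $w\in A_p$) and the weighted maximal inequality on $L^{(p/2)'}(w^{1-(p/2)'})$, both of which hold under $w\in A_{\min\{p,p\lambda/2\}}$.

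\textbf{Case $1<p<2$ (the binding condition is $w\in A_{p\lambda/2}$, with $\lambda>2/p$).} I retain the pointwise inequality derived in the proof of Theorem \ref{thm:gstar},
$$
(g_{\lambda}^{*}(u)(x))^{2}\le C_{\mu}^{2-p}(M_{\mu}u(x))^{2-p}I^{*}(x),
$$
for some $\mu\in(1,p)$, and then apply H\"older with exponents $r=2/p$ and $r'=2/(2-p)$:
$$
\int_{\H^n}(g_{\lambda}^{*}(u))^{p}w\,dx\le C\,\|M_{\mu}u\|_{L^{p}(w)}^{p(2-p)/2}\Big(\int_{\H^n}I^{*}(x)w(x)\,dx\Big)^{p/2}.
$$
The first factor is controlled by $\|u\|_{L^p(w)}$ provided $w\in A_{p/\mu}$, which is achieved by taking $\mu$ close to $1$ and using the hypothesis $w\in A_{p\lambda/2}\subset A_{p/\mu}$ for $\mu$ sufficiently close to $1$ (since $p\lambda/2<p$ in this regime).

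\textbf{Main obstacle.} The key technical point, and the step where the threshold $A_{p\lambda/2}$ enters, is the weighted estimate
$$
\int_{\H^n}I^{*}(x)w(x)\,dx\lesssim \int_{\H^n}|u|^{p}w\,dx.
$$
Switching the order of integration, the $x$-integration produces the kernel
$$
K(y,\rho):=\int_{\H^n}\Big(\frac{\rho}{\rho+|xy^{-1}|}\Big)^{\lambda Q}\rho^{-Q}w(x)\,dx,
$$
which in the unweighted case is simply a constant. To dominate it by $w(y)$ I would split the $x$-region into Kor\'anyi annuli $\{2^{k}\rho\le |xy^{-1}|<2^{k+1}\rho\}$, use $|B(y,2^{k}\rho)|\sim 2^{kQ}\rho^{Q}$, and invoke the $A_{p\lambda/2}$ characterization to trade the geometric decay $2^{-k\lambda Q}$ against the polynomial growth of $w$-averages, precisely as in \cite[Corollary, p.~110]{MW}. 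Combined with the identity $\int_{0}^{\infty}\!\!\int_{\H^n}\rho\,\mathcal{L}U^{p}\,dy\,d\rho=\int_{\H^n}|u|^{p}\,dy$ used in the unweighted proof (now absorbed into a weighted analogue because the weight has been extracted from the $x$-integral), this closes the bound. Everything else amounts to routine bookkeeping once this kernel comparison is established.
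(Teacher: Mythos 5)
The paper never writes out a proof of this theorem: it only records, in the sentence preceding the statement, that the unweighted proof of Theorem \ref{thm:gstar} can be modified, or that one can adapt \cite[Corollary, p.~110]{MW} to the homogeneous-type setting using the Poisson kernel bound \eqref{eq:Pestim} and the weighted maximal theorem of \cite{AM}. So your plan is the same in spirit, but as written it does not prove the stated weight class, and the problem is concrete. In the case $p\ge 2$, once you bound $\int(\rho/(\rho+|x^{-1}y|))^{\lambda Q}\rho^{-Q}h(x)w(x)\,dx\lesssim M(hw)(y)$ you have discarded the decay exponent $\lambda Q$, and the remaining duality step needs $M$ to be bounded on $L^{(p/2)'}(w^{1-(p/2)'})$, which is \emph{equivalent} to $w\in A_{p/2}$. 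Since $\lambda>1$ forces $\min\{p,p\lambda/2\}>p/2$, the class $A_{\min\{p,p\lambda/2\}}$ strictly contains $A_{p/2}$, so your argument only yields the estimate for a smaller class of weights than the theorem asserts. This loss matters for the paper: in Section \ref{sec:proofTh1} the theorem is applied with $\lambda_j<1+2s_j/Q<2$, i.e.\ precisely with weights in $A_{p_j\lambda_j/2}$, which need not lie in $A_{p_j/2}$. To reach the stated class for $p\ge2$ you must keep the kernel factor and trade its decay against the growth of the $w$-averages over Kor\'anyi annuli (the mechanism you reserve for $p<2$), as in Muckenhoupt--Wheeden.

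For $1<p<2$ the decisive step is also not closed. The kernel $K(y,\rho)=\rho^{-Q}\int(\rho/(\rho+|xy^{-1}|))^{\lambda Q}w(x)\,dx$ cannot be dominated pointwise by $w(y)$: averages of an $A_q$ weight over balls are not controlled by its pointwise values, so the annuli decomposition gives at best $K(y,\rho)\lesssim Mw(y)$ (using only $\lambda>1$), or averages of $w$ over $B(y,2^k\rho)$ with geometric factors if one invokes $A_{p\lambda/2}$. But with any nonconstant factor in $y$ (and $\rho$) left inside, the Green-type identity $\int_0^\infty\int_{\H^n}\rho\,\mathcal{L}U^p(y,\rho)\,dy\,d\rho=\int_{\H^n}U^p(y,0)\,dy$ used in the unweighted proof no longer applies: the spatial part of $\mathcal{L}U^p$ integrates away only against a constant in $y$, and integrating by parts instead produces derivatives of the weight (or of $Mw$) that you have no way to control. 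This is exactly the point where the weighted argument of \cite{MW} requires genuine additional work, and the phrase ``absorbed into a weighted analogue'' does not supply it. A smaller, fixable point: when $p<2$ but $\lambda\ge2$ one has $\min\{p,p\lambda/2\}=p$, so the inclusion $A_{p\lambda/2}\subset A_{p/\mu}$ you invoke is unavailable; there you need the self-improvement (openness) property of the $A_q$ classes to choose $\mu>1$ with $w\in A_{p/\mu}$.
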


\subsection{A mean value theorem for subharmonic functions on $\H^n\times \R^+$}
\label{sub:mvt}

 Let us write $E:=-\mathcal{L}+\partial_{\rho\rho}=\nabla\cdot \nabla$ and
$$
|\nabla u|^2=\sum_{j=1}^n\big((X_ju)^2+(Y_ju)^2\big)+\partial_{\rho\rho}u,
$$
where $X_j, Y_j$, $j=1,\ldots, n$ are given in \eqref{vfields}. It turns out that $E$ is homogeneous of degree $2$, hypoelliptic\footnote{Recall that a differential operator $D$ is hypoelliptic if the solutions of the equation $Df=g$ with $g\in C^{\infty}$, are also $C^{\infty}$. In our case, by a theorem of H\"ormander \cite{Ho}, since $X_j, Y_j$, $j=1,\ldots, n$ are vector fields with the property that their commutators up to a certain order span the tangent space at every point, then $\sum_{j=1}^n\big((X_ju)^2+(Y_ju)^2\big)$ is hypoelliptic, and hence $E$ is.} and formally self-adjoint. These facts imply that it possesses a fundamental solution $\Gamma$ which is $C^{\infty}$ off the diagonal in $ \H^n\times \R^+\times  \H^n\times \R^+$, see \cite{F1,SC}.

Following for instance \cite{BL}, a function $h$ will be called $E$-\textit{harmonic} in an open set $\Omega\subset \R_+\times \H^n$ if $h:\Omega\to \R$ is smooth and $Eu=0$ in $\Omega$. An upper semicontinuous function $u:\Omega\to \R$ will be said $E$-\textit{subharmonic} in $\Omega$ if 
\begin{itemize}
\item[(i)] the set $\Omega(u):=\{x\in \Omega: u(x)>-\infty\}$ contains at least one point of every (connected) component of $\Omega$, and
\item[(ii)] for every bounded set $V\subset \overline{V}\subset \Omega$ and for every $E$-harmonic function $h\in C^2(V,\R)\cap C(\overline{V},\R)$ such that $u\le h$ on $\partial V$, one has $u\le h$ in $V$. 
\end{itemize}
A subharmonic function $u$ satisfies that $Eu\ge 0$ on $\Omega$.

 Starting from a result of \cite{CGL} related to general hypoelliptic operators sum of squares of vector fields, representation formulas for subharmonic functions on Carnot groups are proved in \cite{BL1}, see also \cite{BL} (a full discussion can be found in \cite[Chapter 5]{BLU}). We rewrite the results specified to our context.

Recall that $d(z,t)=|(z,t)| = (|z|^4+16 t^2)^{1/4}$. Let us denote $\widetilde{d}((z,t),\rho)=(\rho^4+|z|^4+16 t^2)^{1/4}$. 
\begin{equation}
\label{eq:ballsph}
B_r=\{((z,t), \rho)\in \H^n\times \R^+: \widetilde{d}((z,t),\rho)<r\}, 
\end{equation}
and 
\begin{equation}
\label{eq:ballsph2}
 \partial B_r=\{((z,t), \rho)\in \H^n\times \R^+:\widetilde{d}((z,t),\rho)=r\},
\end{equation}
and call these sets, respectively, the \textit{extended Heisenberg ball} and \textit{extended sphere} centered at the origin with radius $r$. Balls and spheres centered at points other than the origin are defined by left-translation and the usual Euclidean distance. We let $d((z,t),(z',t'))=d((z',t')^{-1}(z,t))$ denote the distance between $(z,t)$ and $(z',t')$. Then the ball $B_r((z',t'),\rho')=B(((z',t'),\rho'),r)$ and the sphere $\partial B_r((z',t'),\rho')$ centered at $(z',t',\rho')$ with radius $r$ are obtained by replacing $ (\rho^4+|z|^4+16 t^2)^{1/4}$ in \eqref{eq:ballsph} and \eqref{eq:ballsph2} with $\widetilde{d}(((z,t),\rho),((z',t'),\rho'))=\big(d((z,t),(z',t'))^4+|\rho-\rho'|^4\big)^{1/4}$. For simplicity, below we will denote $x=((z,t),\rho)$, $y=((z',t'),\rho')$ and $x^{-1}y=((z,t)^{-1}(z',t'), \rho'-\rho)$.

\begin{theorem}
\label{thm:formulas}
Let $u\in C^{\infty}(\H^n\times \R^+)$ be a $E$-subharmonic function in an open subset $\Omega\subset \H^n\times \R^+$. Then for every $x=((z,t),\rho)\in\H^n\times \R^+$ and $r>0$ such that $\overline{B_r}(x)\subset \Omega\subset \H^n\times \R^+$ we have
\begin{equation}
\label{eq:mvtB}
u(x)\le \frac{C_n}{r^{Q+1}}\int_{B_r(x)}K(x^{-1}y)u(y)\,dy,  \qquad K:=|\nabla \widetilde{d}|^2.
\end{equation}
\end{theorem}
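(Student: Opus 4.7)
The plan is to derive \eqref{eq:mvtB} by specializing the Bonfiglioli–Lanconelli mean value representation formulas for sub-Laplacian subharmonic functions on Carnot groups (see \cite{BL1,BLU}, built on \cite{CGL}) to the extended operator $E=\partial_{\rho\rho}-\mathcal{L}$. First, I would recognize $\H^n\times\R$ as a direct-product Carnot group: the Heisenberg dilations extend to $\delta_\lambda((z,t),\rho)=((\lambda z,\lambda^2 t),\lambda\rho)$, under which $\rho$ has weight one, and the stratification $V_1=\mathrm{span}(X_j,Y_j,\partial_\rho)$, $V_2=\mathrm{span}(T)$ gives the extended group the homogeneous dimension $Q+1=2n+3$. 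With respect to this structure, $E=\partial_{\rho\rho}+\sum_{j=1}^n(X_j^2+Y_j^2)$ is the canonical sum-of-squares sub-Laplacian: left-invariant, homogeneous of degree two, formally self-adjoint, and hypoelliptic by H\"ormander's theorem. The extended Koranyi-type gauge $\widetilde{d}((z,t),\rho)=(\rho^4+|z|^4+16t^2)^{1/4}$ is smooth off the origin, symmetric under group inversion, and $\delta_\lambda$-homogeneous of degree one, so the balls $B_r$ and spheres $\partial B_r$ defined in \eqref{eq:ballsph}--\eqref{eq:ballsph2} are precisely the gauge balls and spheres on this extended group.

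Second, I would check that, up to a positive constant $c_n$, $\widetilde{d}^{-(Q-1)}$ is the Folland fundamental solution of $E$ at the origin, so that $\widetilde{d}$ is the intrinsic $E$-gauge in the sense of \cite[Chapter 5]{BLU}. The exponent $Q-1$ is forced by homogeneity, since degree $-(Q+1)+2=-(Q-1)$ is the correct scaling of a fundamental solution in homogeneous dimension $Q+1$. The identity $E\widetilde{d}^{-(Q-1)}=0$ off the origin is then verified by direct computation of $X_j\widetilde{d},\,Y_j\widetilde{d},\,\partial_\rho\widetilde{d}$ and their second derivatives, mirroring Folland's classical argument for $d^{-(Q-2)}$ and $\mathcal{L}$ on $\H^n$, with the additional $\rho$-contribution controlled by the extra $\rho^4$ term in $\widetilde{d}^4$. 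Once this identification is made, the kernel $K=|\nabla\widetilde{d}|^2$ is exactly the natural surface density appearing in the Bonfiglioli–Lanconelli representation.

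Third, I invoke the abstract mean value theorem: for any $E$-harmonic $h$, \cite[Theorem 5.6.3]{BLU} (see also \cite{BL1}) gives a surface identity of the form
\[
h(x)=\frac{c'_n}{r^{Q}}\int_{\partial B_r(x)}K(x^{-1}y)\,h(y)\,d\sigma_E(y),
\]
with $d\sigma_E$ the intrinsic surface measure. For $E$-subharmonic $u$, a standard comparison with harmonic majorants turns this identity into the surface inequality $u(x)\le \frac{c'_n}{\rho^{Q}}\int_{\partial B_\rho(x)}K\,u\,d\sigma_E$ valid for every $\rho\in(0,r]$. Integrating in $\rho\in(0,r]$ via the coarea formula (absorbing the Jacobian $|\nabla\widetilde{d}|^{-1}$ into $d\sigma_E$) produces \eqref{eq:mvtB} with the prefactor $r^{-(Q+1)}$ matching the homogeneous dimension of the extended group and a computable $C_n>0$ depending only on $n$ via the normalization of the fundamental solution. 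Left-invariance of $E$ and $\widetilde{d}$ reduces arbitrary $x$ to $x=0$.

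The main anticipated obstacle is the explicit verification of the fundamental-solution claim in step two: while $E\widetilde{d}^{-(Q-1)}=0$ away from the origin is plausible on homogeneity grounds, actually checking it requires a careful but lengthy computation extending Folland's Heisenberg argument, and pinning down the constant $c_n$ (hence $C_n$) requires normalizing against a Dirac mass at the origin. Once these are in place, \eqref{eq:mvtB} follows as a direct application of the general Carnot-group mean value theory.
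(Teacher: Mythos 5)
Your overall strategy (specialize the Bonfiglioli--Lanconelli mean value theory for sub-Laplacians on Carnot groups to the product group $\H^n\times\R$ with the extended dilations, which is exactly what the paper does by citing \cite{CGL,BL1,BL,BLU}) is the right one, but your step two contains a genuine error on which the rest of your argument rests. The claim that $\widetilde d^{-(Q-1)}$, with $\widetilde d((z,t),\rho)=(\rho^4+|z|^4+16t^2)^{1/4}$, is (up to a constant) the fundamental solution of $E=\partial_{\rho\rho}+\sum_j(X_j^2+Y_j^2)$ is false: no power of $\widetilde d$ is $E$-harmonic off the origin. Indeed, writing $d^4=|z|^4+16t^2$ one computes $E\,\widetilde d^{\,4}=8(n+2)|z|^2+12\rho^2$ and $|\nabla\widetilde d^{\,4}|^2=16\big(|z|^2d^4+\rho^6\big)$, so for $u=\widetilde d^{-a}$ the harmonicity condition $\widetilde d\,E\widetilde d=(a+1)|\nabla\widetilde d|^2$ becomes
\begin{equation*}
\big(\rho^4+d^4\big)\big(8(n+2)|z|^2+12\rho^2\big)=4(a+4)\big(|z|^2d^4+\rho^6\big),
\end{equation*}
and the cross terms $8(n+2)|z|^2\rho^4$ and $12\rho^2d^4$ on the left cannot be cancelled for any $a$. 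Equivalently, $\H^n\times\R$ is a step-two Carnot group but not a group of Heisenberg type (the direction $\partial_\rho$ is killed by the map $J_T$), so the Folland--Kaplan closed form of the fundamental solution is simply not available, and your proposed ``direct computation mirroring Folland's argument'' would not close.

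Because of this, $\widetilde d$ cannot be identified with the intrinsic $E$-gauge $d_E=\Gamma_E^{1/(2-(Q+1))}$ to which the surface and solid representation formulas of \cite[Chapter 5]{BLU} are attached, and your kernel identification $K=|\nabla\widetilde d|^2$ as ``the natural surface density'' is unjustified; the subsequent surface-to-solid integration via the coarea formula is then being performed over the wrong family of balls. The paper does not attempt your computation: it invokes the representation formulas of \cite{BL1,BL,BLU} and rewrites them in this setting, and in the sequel it only uses the weaker consequence \eqref{eq:mean} for the nonnegative subharmonic function $|\nabla F|^2$. If you want a self-contained argument in that spirit, the workable route is to apply the \cite{BLU} mean value inequality on the $d_E$-gauge balls (whose kernel is bounded), and then pass to the Koranyi-type balls $B_r$ of \eqref{eq:ballsph} using the equivalence of all homogeneous norms on the extended group together with nonnegativity of the integrand; this yields an inequality of the form \eqref{eq:mean} with a dimensional constant, but it does not produce \eqref{eq:mvtB} with the specific kernel $K=|\nabla\widetilde d|^2$ by the mechanism you describe.
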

It can be checked that $|\nabla \widetilde{d}((z,t),\rho)|^2\le 1$ in $\H^n\times \R^+$ so \eqref{eq:mvtB} in Theorem \ref{thm:formulas} yields that, for a $E$-subharmonic function $u$ in an open subset $\Omega\subset \H^n\times \R^+$,
\begin{equation}
\label{eq:mean}
u(x)\le \frac{C_n}{r^{Q+1}}\int_{B_r(x)}K(x^{-1}y)u(y)\,dy\le \frac{C_n}{r^{Q+1}}\int_{B_r(x)}u(y)\,dy.
\end{equation}

 \section{Proof of Theorem \ref{thm:pointwise}}
 \label{sec:proofTh2}
   
   The proof follows the argument sketched by Stein in \cite{St}, and explicitly written in detail in \cite{D}. Without loss of generality, we will prove the inequality at $x=0$. Let $u\in \mathcal{S}(\H^n)$ and recall that we denote by $U(x,\rho):=e^{-\rho\mathcal{L}^{1/2}}u(x)$ the Poisson semigroup associated with the non-conformal harmonic extension \eqref{eq:he}. In $\H^n\times \R^+$, let $\Gamma$ be the path joining the points $(0,0)$ and $(y,0)$, consisting of the line segments joining $(0,0)$ with $(0,|y|)$, $(0,|y|)$ with $(y,|y|)$, and $(y,|y|)$ with $(y,0)$, where $y\in \H^n$. By Stokes' theorem we have
 $$
 \int_{\Gamma}\nabla U \,d\lambda= U(y,0)-U(0,0)= u(y)-u(0), 
 $$
 which implies
 $$
 |u(y)-u(0)|\le \int_0^{|y|}\big(|\nabla U(y,\lambda)|+|\nabla U(0,\lambda)|+|\nabla U(\lambda\widehat{y},|y|)|\big)\,d \lambda,\qquad \widehat{y}=\frac{y}{|y|}.
 $$
 Let us denote by $F(x,\rho):=e^{-\rho \mathcal{L}^{1/2}}(\mathcal{L}^{s/2}u)$ the non-conformal harmonic extension of $\mathcal{L}^{s/2}u$.
 
 We have the following fundamental identity.

\begin{lemma}
\label{lem:identity}
Let $u\in \mathcal{S}(\H^n)$. Then, for $0<s<n+1$,
$$
U(x,\rho)=\frac{1}{\Gamma(s)}\int_0^{\infty}F(x,\rho+\mu)\mu^{s-1}\,d\mu.
$$
\end{lemma}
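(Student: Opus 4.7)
The identity is a subordination-type relation that ultimately reduces to the classical Gamma-function integral
\[
\lambda^{-s} = \frac{1}{\Gamma(s)}\int_0^\infty e^{-\mu\lambda}\mu^{s-1}\,d\mu, \qquad \lambda>0,\ s>0,
\]
applied to the spectral variables of $\mathcal{L}^{1/2}$. Setting $A:=\mathcal{L}^{1/2}$, the identity to prove reads
\[
e^{-\rho A}u \;=\; \frac{1}{\Gamma(s)}\int_0^\infty e^{-(\rho+\mu)A}(A^s u)\,\mu^{s-1}\,d\mu,
\]
so the core step is to establish the operator identity $A^{-s}=\frac{1}{\Gamma(s)}\int_0^\infty e^{-\mu A}\mu^{s-1}\,d\mu$ and then apply it to $A^s u$ followed by the multiplier $e^{-\rho A}$, using the semigroup property $e^{-\rho A}e^{-\mu A}=e^{-(\rho+\mu)A}$.

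My plan is first to pass to the spectral side using the decomposition recalled in Subsection \ref{sub:Sub}. For $u\in\mathcal{S}(\H^n)$, write
\[
u(z,t) = (2\pi)^{-n-1}\int_{-\infty}^{\infty}\Big(\sum_{k=0}^\infty u^\lambda*_\lambda \varphi_k^\lambda(z)\Big)e^{-i\lambda t}|\lambda|^n\,d\lambda,
\]
and note that $A$, $A^s$, and $e^{-\rho A}$ all act as Fourier multipliers of symbols $((2k+n)|\lambda|)^{1/2}$, $((2k+n)|\lambda|)^{s/2}$, and $e^{-\rho ((2k+n)|\lambda|)^{1/2}}$, respectively. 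On each spectral component the claim reduces to the scalar identity above with $\lambda$ replaced by $((2k+n)|\lambda|)^{1/2}$ and $s$ replaced by $s$ there, multiplied on both sides by $e^{-\rho ((2k+n)|\lambda|)^{1/2}}((2k+n)|\lambda|)^{s/2}$; this makes the pointwise equality on the spectral side an immediate consequence of the Gamma integral.

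Next I would reconstitute the identity at the level of functions. Multiplying the spectral equality by $e^{-i\lambda t}|\lambda|^n$, summing in $k$ and integrating in $\lambda$, the left-hand side is $U(x,\rho)$ while the right-hand side contains the inner $\mu$-integral of the spectral multiplier $e^{-(\rho+\mu)((2k+n)|\lambda|)^{1/2}}((2k+n)|\lambda|)^{s/2}$, which is exactly the symbol of $F(x,\rho+\mu)$. Interchanging the order of integration between $\mu$ and $(k,\lambda)$ produces the claimed integral representation
\[
U(x,\rho) \;=\; \frac{1}{\Gamma(s)}\int_0^\infty F(x,\rho+\mu)\,\mu^{s-1}\,d\mu.
\]

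The one genuine obstacle is justifying the Fubini step: one must control the joint $(\mu,k,\lambda)$-integrand near $\mu=0$ (where $\mu^{s-1}$ is only locally integrable because $s>0$) and at infinity (both in $\mu$ and in the spectral variables). Near $\mu=0$ the weight $\mu^{s-1}$ is integrable and the multiplier is uniformly bounded by $((2k+n)|\lambda|)^{s/2}$, while for large $\mu$ the exponential factor $e^{-\mu((2k+n)|\lambda|)^{1/2}}$ gives super-polynomial decay, so the full $\mu$-integral is dominated by $((2k+n)|\lambda|)^{-s/2}\cdot((2k+n)|\lambda|)^{s/2}=1$ times an absolutely convergent Gamma-type tail. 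Combined with the Schwartz regularity of $u$, which provides the requisite rapid decay of the spectral coefficients $\|\widehat u(\lambda)\Phi_\alpha^\lambda\|_{L^2}$ in both $|\alpha|$ and $|\lambda|$, this makes the double integral absolutely convergent and validates the exchange. The restriction $s<n+1$ enters only for compatibility with the range of $s$ in which the fractional powers relevant to the paper are defined; the subordination argument itself works for every $s>0$.
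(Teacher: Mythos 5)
Your proposal is correct and is essentially the paper's argument: both rest on the Gamma-integral subordination $\nu^{-s}=\frac{1}{\Gamma(s)}\int_0^\infty e^{-\mu\nu}\mu^{s-1}\,d\mu$ together with the semigroup property of $e^{-\rho\mathcal{L}^{1/2}}$. The only difference is presentational: the paper works at the level of the functional calculus, invoking the representation $\mathcal{L}^{-s/2}f=\frac{1}{\Gamma(s)}\int_0^\infty e^{-\mu\mathcal{L}^{1/2}}f\,\mu^{s-1}\,d\mu$ and $\mathcal{L}^{-s/2}\mathcal{L}^{s/2}=\operatorname{Id}$, whereas you verify the same identity componentwise on the spectral side and make the Fubini justification explicit.
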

  \begin{proof}
By using the semigroup property of $e^{-\rho\mathcal{L}^{1/2}}u(x)$ and the fact that $\mathcal{L}^{-s/2}\mathcal{L}^{s/2}=\operatorname{Id}$, we get
  \begin{align*}
  \int_0^{\infty}F(x,\rho+\mu)\mu^{s-1}\,d\mu&=\int_0^{\infty}e^{-(\rho+\mu)\mathcal{L}^{1/2}}(\mathcal{L}^{s/2}u)(x)\mu^{s-1}\,d\mu\\
  &=e^{-\rho\mathcal{L}^{1/2}}\int_0^{\infty}e^{-\mu\mathcal{L}^{1/2}}(\mathcal{L}^{s/2}u)(x)\mu^{s-1}\,d\mu\\
  &=\Gamma(s)e^{-\rho\mathcal{L}^{1/2}}\mathcal{L}^{-s/2}(\mathcal{L}^{s/2})u(x)\\
  &=\Gamma(s)e^{-\rho\mathcal{L}^{1/2}}u(x)=\Gamma(s)U(x,\rho),
  \end{align*}
  as desired.
  \end{proof}
  
  It is important to remark that Lemma \ref{lem:identity} is strongly based on the fact that, for $0<s<n+1$, the Riesz potentials $\mathcal{L}^{-s/2}$ are defined via function calculus in terms of the (non-conformal) Poisson semigroup associated with $\mathcal{L}$ as
$$
\mathcal{L}^{-s/2}f(z,t)=\frac{1}{\Gamma(s)}\int_0^{\infty}e^{-\mu\mathcal{L}^{1/2}}f(z,t)\frac{d\mu}{\mu^{1-s}}.
$$
There is no such an analogue representation for the Riesz potentials $\mathcal{L}_{-s/2}$ in terms of the solution $e^{-\rho\mathcal{L}_{1/2}}$ of the conformally invariant harmonic extension (see the interesting discussion in \cite[Section 1, (1.3) and (1.11)]{GT}) which is refraining us to use a $g_{\lambda}^*$-function defined with $e^{-\rho\mathcal{L}_{1/2}}$.

Lemma \ref{lem:identity} and a change of variables yield
\begin{align*}
|u(y)-u(0)|
&\le \int_0^{|y|}\Big(\int_{\lambda}^{\infty}\big(|\nabla F(y,\mu)|+|\nabla F(0,\mu)|+|\nabla F(\lambda\widehat{y},\mu+|y|-\lambda)| \big)(\mu-\lambda)^{s-1}\,d\mu\Big)\,d\lambda\\
&=: I+II+III+IV,
\end{align*}
where 
\begin{align*}
I&:=\int_0^{|y|}\int_{\lambda}^{|y|}|\nabla F(y,\mu)|(\mu-\lambda)^{s-1}\,d\mu\,d\lambda, \qquad II:=\int_0^{|y|}\int_{\lambda}^{|y|}|\nabla F(0,\mu)|(\mu-\lambda)^{s-1}\,d\mu\,d\lambda,\\
III&:=\int_0^{|y|}\int_{\lambda}^{|y|}|\nabla F(\lambda\widehat{y},\mu+|y|-\lambda)|(\mu-\lambda)^{s-1}\,d\mu\,d\lambda, \\
 IV&:=\int_0^{|y|}\int_{|y|}^{\infty}\big(|\nabla F(y,\mu)|+|\nabla F(0,\mu)|+|\nabla F(\lambda\widehat{y},\mu+|y|-\lambda)| \big)(\mu-\lambda)^{s-1}\,d\mu\big)\,d\lambda.
\end{align*}

\textbf{Estimate of $IV$}. Observe that, for $A>0$ and $s<A<1$,
\begin{equation}
\label{eq:IV}
IV\lesssim \sup_{\substack{ |x|\le |y|\le \rho\\ \rho>|y|}}|\nabla F(x,\rho)|\rho^A\int_0^{|y|}\int_{|y|}^{\infty}\mu^{-A}(\mu-\lambda)^{s-1}\,d\mu\,d\lambda\lesssim  \sup_{\substack{ |x|\le |y|\le \rho\\ \rho>|y|}}|\nabla F(x,\rho)|\rho^A|y|^{1+s-A}. 
\end{equation}
Since $|\nabla F|^2$ is subharmonic, namely $(\partial_{\rho\rho}-\mathcal{L})|\nabla F|^2>0$, and since 
$$
D(x,\rho):=\big\{(\xi,\tau)\in \H^n\times \R^+: |\xi-x|<\frac{\rho}{2}, |\tau-\rho|^2<\frac{\rho}{2}\big\}
$$ 
has equivalent measure to $B_{\lambda/2}(x,\rho)$ we get, from $\eqref{eq:IV}$ and \eqref{eq:mean}, that
  $$
 (IV)^2\lesssim  \sup_{ |x|\le |y|\le \rho}\rho^{2A}|y|^{2+2s-2A}\int_D|\nabla F(\xi,\tau)|^2\,d\xi\,d\tau \frac{1}{\rho^{1+Q}}.
 $$

Let 
$$
C:= \{(\xi,\tau)\in \H^n\times \R^+: |\xi|\le 3\tau\}.
$$ 
\begin{figure}[h]
\includegraphics[scale=0.6]{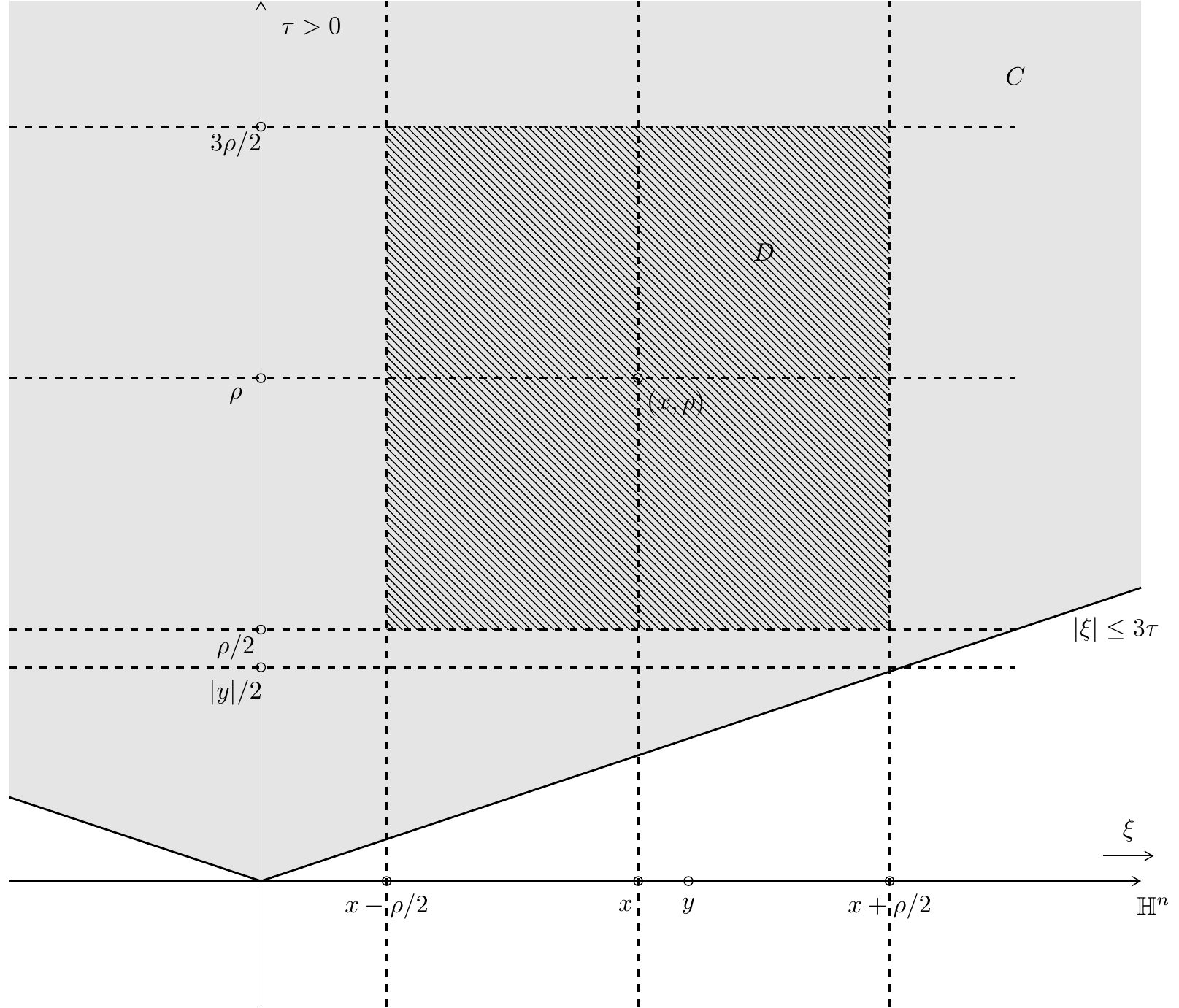}
\caption{The sets $D$ and $C$.}
\label{figure}
\end{figure}
Notice that $D\subset \{(\xi,\tau)\in C: \tau\ge |y|/2\}$
and in $D$ we have that $\rho/2\le \tau\le 3/2 \rho$, see Figure \ref{figure}, thus
 $$
 (IV)^2\lesssim  |y|^{2+2s-2A}\int_{C,\tau\ge |y|/2}|\nabla F(\xi,\tau)|^2\tau^{2A-Q-1}\,d\xi\,d\tau \frac{1}{\rho^{1+Q}}.
 $$
 We divide by $|y|^{Q+2s}$ and integrate in $y$, to obtain
 \begin{equation}
 \label{eq:IVf}
 \int_{\H^n}\frac{(IV)^2}{|y|^{Q+2s}}\,dy\lesssim \int_{C}\Big(\int_{|y|\le 2\tau} |y|^{2-Q-2A}\,dy\Big)|\nabla F(\xi,\tau)|^2\tau^{2A-Q-1}\,d\xi\,d\tau\lesssim  \int_{C}|\nabla F(\xi,\tau)|^2\tau^{1-Q}\,d\xi\,d\tau.
 \end{equation}

\textbf{Estimate of $III$}. In this case, $0\le\lambda\le \mu\le |y|$, so that $\mu+|y|-\lambda\ge \lambda+|y|-\lambda=|y|\ge \lambda=|\lambda \widehat{y}|$, and then
\begin{align*}
III&\lesssim  \sup_{ |x|\le |y|\le \rho}\rho^A|\nabla F(x,\rho)|\int_0^{|y|}\int_{\lambda}^{|y|} (|y|+\mu-\lambda)^{-A}(\mu-\lambda)^{s-1}\,d\xi\,d\lambda\\
&\lesssim \sup_{ |x|\le |y|\le \rho}\rho^A|\nabla F(x,\rho)||y|^{-A+1+s}.
 \end{align*}
 Hence, reasoning as in the case of $IV$, we conclude that $III$ satisfies \eqref{eq:IVf}, namely
 \begin{equation}
 \label{eq:IIIf}
 \int_{\H^n}\frac{(IV)^2}{|y|^{Q+2s}}\,dy\lesssim  \int_{C}|\nabla F(\xi,\tau)|^2\tau^{1-Q}\,d\xi\,d\tau.
 \end{equation}
 
\textbf{Estimate of $II$}. Let $0<\varepsilon<2s$. By Fubini and Cauchy-Schwartz, we have
 \begin{align*}
 II&=\int_0^{|y|}\Big(\int_{0}^{\mu}|\nabla F(0,\mu)|(\mu-\lambda)^{s-1}\,d\lambda\Big)\,d\mu\\
 &\simeq \int_0^{|y|}|\nabla F(0,\mu)|\mu^s\,d\mu\\
 &\le \Big(\int_0^{|y|}|\nabla F(0,\mu)|^2\mu^{1+2s-\varepsilon}\,d\mu\Big)^{1/2}\Big(\int_0^{|y|}\mu^{\varepsilon-1}\,d\mu\Big)^{1/2}\\
 &\lesssim \Big(\int_0^{|y|}|\nabla F(0,\mu)|^2\mu^{1+2s-\varepsilon}\,d\mu\Big)^{1/2}|y|^{\varepsilon/2}.
 \end{align*}
 From here, 
 $$
 (II)^2\lesssim \int_0^{|y|}|\nabla F(0,\mu)|^2\mu^{1+2s-\varepsilon}\,d\mu|y|^{\varepsilon},
 $$
 thus, applying Fubini 
 \begin{align*}
  \int_{\H^n}\frac{(II)^2}{|y|^{Q+2s}}\,dy&\lesssim \int_{\H^n} |y|^{\varepsilon-Q-2s}\Big(\int_0^{|y|}|\nabla F(0,\mu)|^2\mu^{1+2s-\varepsilon}\,d\mu\Big)\,dy\\
  &=\int_0^{\infty}\Big( \int_{|y|\ge \mu} |y|^{\varepsilon-Q-2s}\,dy\Big)|\nabla F(0,\mu)|^2\mu^{1+2s-\varepsilon}\,d\mu\\
  &\lesssim \int_0^{\infty}|\nabla F(0,\mu)|^2\mu\,d\mu.
 \end{align*}
Then by \eqref{eq:mean} we obtain
$$
 \int_{\H^n}\frac{(II)^2}{|y|^{Q+2s}}\,dy\lesssim \int_0^{\infty}\mu\mu^{-1-Q}\int_{E}|\nabla F(\xi,\tau)|^2\, d\xi\, d\tau\,d\mu
$$
with $E:=\{(\xi,\tau): |\xi|\le \mu/2, |\tau-\mu|\le \mu/2\}$. Notice that, in $E$, $\mu/2\le \tau\le 3/2\mu$, so $2/3\tau\le \mu\le 2\tau$ and $E\subset C$, where $C$ is the cone defined above. By Fubini,
 \begin{equation}
 \label{eq:IIf}
  \int_{\H^n}\frac{(II)^2}{|y|^{Q+2s}}\,dy\lesssim \int_C|\nabla F(\xi,\tau)|^2\Big(\int_{2/3\tau}^{2\tau}\mu^{-Q}\,d\mu\Big)\,d\xi\,d\tau\lesssim  \int_C|\nabla F(\xi,\tau)|^2\tau^{1-Q}\,d\xi\,d\tau.
 \end{equation}

\textbf{Estimate of $I$}. Let $0<\varepsilon<2s$. By Fubini and Cauchy-Schwartz, we get
 \begin{align*}
 I=\int_0^{|y|}\Big(\int_{0}^{\mu}|\nabla F(y,\mu)|(\mu-\lambda)^{s-1}\,d\lambda\Big)\,d\mu&\simeq \int_0^{|y|}|\nabla F(y,\mu)|\mu^s\,d\mu\\
 &\lesssim \Big(\int_0^{|y|}|\nabla F(0,\mu)|^2\mu^{1+2s-\varepsilon}\,d\mu\Big)^{1/2}|y|^{\varepsilon/2}.
  \end{align*}
 Therefore, by using Fubini again
 \begin{align*}
  \int_{\H^n}\frac{(I)^2}{|y|^{Q+2s}}\,dy&\lesssim \int_{\H^n} |y|^{\varepsilon-Q-2s}\Big(\int_0^{|y|}|\nabla F(y,\mu)|^2\mu^{1+2s-\varepsilon}\,d\mu\Big)\,dy\\
  &=\int_0^{\infty}\Big(\int_{|y|\ge \mu}|y|^{\varepsilon-Q-2s}|\nabla F(y,\mu)|^2\,dy\Big)\mu^{1+2s-\varepsilon}\,d\mu.
   \end{align*}
Hence,
   \begin{equation}
   \label{eq:If}
 \int_{\H^n}\frac{(I)^2}{|y|^{Q+2s}}\,dy  \lesssim \int_{|y|\ge \mu}|\nabla F(y,\mu)|^2\frac{\mu^{1+2s-\varepsilon}}{|y|^{Q+2s-\varepsilon}}\,dy\,d\mu.
  \end{equation}

 Gathering \eqref{eq:IVf},  \eqref{eq:IIIf}, \eqref{eq:IIf}, and \eqref{eq:If}, we have that, for $\varepsilon\in (0,2s)$, 
  \begin{equation}
  \label{eq:todo}
  \int_{\H^n}\frac{|u(y)-u(0)|^2}{|y|^{Q+2s}}\,dy\lesssim \int_C|\nabla F(\xi,\tau)|^2\tau^{1-Q}\,d\xi\,d\tau+\int_{|y|\le \mu}|\nabla F(y,\mu)|^2\frac{\mu^{1+2s-\varepsilon}}{|y|^{Q+2s-\varepsilon}}\,dy\,d\mu. 
  \end{equation}
  
Recall the definition of $g_{\lambda}^*$ in \eqref{eq:gstar2}, then 
$$
g_{\lambda}^*(\mathcal{L}^{s/2}u)(0)=\int_0^{\infty}\int_{\H^n}\Big(\frac{\tau}{\tau+|\xi|}\Big)^{\lambda Q}\rho^{1-Q}|\nabla F(\xi,\tau)|^2\,d\rho.
$$
It is easy to see that the first integral in the right hand side of \eqref{eq:todo} can be estimated by $g_{\lambda}^*(\mathcal{L}^{s/2}u)(0)$ for any $\lambda$. For the second one, observe that the restriction $\mu\le |y|$ implies that $\frac{1}{|y|}\le \frac{2}{\mu+|y|}$, therefore 
$$
\frac{\mu^{1+2s-\varepsilon}}{|y|^{Q+2s-\varepsilon}}\lesssim \frac{\mu^{1+2s-\varepsilon}}{(|y|+\mu)^{Q+2s-\varepsilon}}.
$$
Then the second term is bounded by $g_{\lambda}^*(\mathcal{L}^{s/2}u)(0)$ with $\lambda=\frac{1}{Q}(Q+2s-\varepsilon)$. Since $\varepsilon$ is arbitrarily small, the proof is finished.

 \section{Proof of Theorem \ref{thm:main}}
 \label{sec:proofTh1}
 
 Let us call $x=(z,t)$. The pointwise representation \eqref{eq:ir} implies the identity
$$
\mathcal{L}_s(uv)-u\mathcal{L}_sv-v\mathcal{L}_su=b(n,s)T_s(u,v), \quad 0<s<1/2,
$$
where $b(n,s)$ is the constant in \eqref{eq:bns} and $T_s(u,v)$ is the bilinear form
$$
T_s(u,v)(x)=\int_{\H^n}\frac{[u(xy^{-1})-u(x)(v(xy^{-1})-v(x)]}{|y|^{Q+2s}}\,dy, \qquad x\in \H^n, \quad 0<s<1/2.
$$
Then, in order to prove Theorem \ref{thm:main}, we are reduced to show
that, for all $u,v\in \mathcal{S}(\H^n)$ we have 
  $$
  \|T_s(u,v)\|_{L^p}\lesssim \|\mathcal{L}_{s_1}u\|_{L^{p_1}}\|\mathcal{L}_{s_2}v\|_{L^{p_2}}.
  $$
Recall the definition of the square fractional integral
  $$
  \mathcal{D}_su(x):=\Big(\int_{\H^n}\frac{|u(xy^{-1})-u(x)|^2}{|y|^{Q+4s}}\,dy\Big)^{1/2}, \quad 0<s<1/2.
  $$
 Observe that, by Cauchy-Schwartz we have the poinwise estimate
 \begin{equation}
 \label{eq:CS}
 |T_s(u,v)(x)|\le  \mathcal{D}_{s_1}u(x) \mathcal{D}_{s_2}v(x), \qquad x\in \H^n, \qquad s=s_1+s_2, \qquad s_j\in (0,1/4).
\end{equation}
In view of \eqref{eq:CS}, H\"older's inequality and Theorem \ref{thm:pointwise} yield
$$
  \|T_s(u,v)\|_{L^p}\lesssim   \|\mathcal{D}_{s_1}u\|_{L^{p_1}}  \|\mathcal{D}_{s_2}u\|_{L^{p_2}}\le \Lambda(n,s_1)\Lambda(n,s_2) \|g_{\lambda_1}^*(\mathcal{L}^{s_1}u)\|_{L^{p_1}}\|g_{\lambda_2}^*(\mathcal{L}^{s_2}u)\|_{L^{p_2}},
  $$
  for any $p,p_1,p_2\in (0,\infty]$ with $\frac1p=\frac{1}{p_1}+\frac{1}{p_2}$, any $s_j\in (0,1/4)$ and any $\lambda_j<1+\frac{2s_j}{Q}$. 
By Theorem \ref{thm:gstar} and \eqref{eq:equiv} we conclude \eqref{eq:1}, namely
  $$
  \|T_s(u,v)\|_{L^p}\lesssim   \|\mathcal{D}_{s_1}u\|_{L^{p_1}}  \|\mathcal{D}_{s_2}u\|_{L^{p_2}}\lesssim\Lambda(n,s_1)\Lambda(n,s_2)\|\mathcal{L}_{s_1}u\|_{L^{p_1}}\|\mathcal{L}_{s_2}u\|_{L^{p_2}}
  $$
provided $\lambda_j$ is such that $\max\big\{1,\frac{2}{p_j}\big\}<\lambda_j<1+\frac{2s_j}{Q}$. The estimate \eqref{eq:2} follows from \eqref{eq:1} and \eqref{eq:equiv}.

The weighted estimate \eqref{eq:3} is proved similarly, using Theorem \ref{thm:gstarw}, which imposes the conditions $w_j\in A_{q_j}$, for $1<q_j<\min\big\{p_j,p_j\big(\frac12+\frac{s_j}{Q}\big)\big\}$. Nevertheless, the self-improving property of Muckenhoupt weights allows to relax the condition into $w_j\in A_{q_j}$, for $1<q_j=\min\big\{p_j,p_j\big(\frac12+\frac{s_j}{Q}\big)\big\}$. Since we always have $s_j/Q<1/2$, we obtain that $w_j\in A_{q_j}$, for $1<q_j=p_j\big(\frac12+\frac{s_j}{Q}\big)$, as desired. Finally, the estimate \eqref{eq:4} follows from  \eqref{eq:equiv}.
 

\end{document}